\documentclass[a4paper,USenglish]{article}

\usepackage[left=2.5cm,right=2.5cm,top=2.5cm,bottom=3cm]{geometry}
\usepackage[pdfborder={0 0 0}]{hyperref}
\usepackage{enumitem}
\usepackage{amsmath}
\usepackage{amssymb}
\usepackage{amsthm}
\usepackage{mathtools}
\usepackage[table,xcdraw]{xcolor}
\usepackage{booktabs}
\usepackage{pdflscape}
\usepackage[leftcaption]{sidecap}
\sidecaptionvpos{figure}{t}
\sidecaptionvpos{table}{t}

\usepackage{tikz}
\tikzset{every picture/.style={baseline={(current bounding box.north)}}}

\newcommand\Alpha{\mathcal{A}}

\def\tgrid{\mathcal{T}}
\def\val{\mathrm{val}}

\newcommand{\Lang}[1]{\text{\normalfont\textsc{#1}}}
\newenvironment{parameterizedproblem}{%
  \leavevmode\nobreak\par
  \begin{list}%
    {}%
    {%
      \def\labelstyle{\itshape}
      \setlength{\topsep}{0pt}%
      \settowidth{\labelwidth}{\labelstyle Parameter:}%
      \setlength{\leftmargin}{\labelwidth}%
      \addtolength{\leftmargin}{\labelsep}%
      \setlength{\itemsep}{0pt}%
      \setlength{\parsep}{0pt}%
    }%
      \def\instance{\item[\labelstyle Instance:]}%
      \def\objective{\item[\labelstyle Objective:]}%
      \def\minimize{\item[\labelstyle Minimize:]}%
}{%
  \end{list}%
}

\newtheorem{problem}{Problem}
\newtheorem{example}{Example}
\newtheorem{lemma}{Lemma}

\newtheorem{theorem}{Theorem}
\newtheorem{observation}{Observation}
\newtheorem{reductionrule}{Rule}

\definecolor{jade}{rgb}{0.0, 0.66, 0.42}
\definecolor{cerise}{HTML}{CE4760}
\colorlet{fg}{jade!75!black}
\colorlet{bg}{cerise!75!black}

\usetikzlibrary{decorations.pathreplacing}
\usetikzlibrary{graphs}
\usetikzlibrary{arrows.meta}
\usetikzlibrary{shapes.geometric, arrows}
\usetikzlibrary{backgrounds}
\usetikzlibrary{3d}
\usetikzlibrary{calc, shapes.geometric}
\tikzset{
  dot/.style = {
    circle,
    inner sep     = 0pt,
    fill          = black,
    minimum width = 1.5mm
  },
  arc/.style = {
    semithick, ->, >={[round,sep]Stealth}
  }
}

\usepackage{listings}
\lstdefinelanguage{pseudocode}{
  morekeywords={
    algorithm,method,new,and,
    if,then,else,while,do,repeat,until,seq,
    seqdo,return,call,
    for,pardo,foreach,print,output,input,exit,
    break,loop,end,begin,goto,par,global,local,
    read,write,stop,idle,procedure,function,
    throw,catch,continue
  },
  sensitive=true,
  morecomment=[l]{//},
  morestring=[b]",
  morestring=[s]{``}{''},
}
\lstdefinestyle{pseudocode}{
  language=pseudocode,
  basicstyle=\small\rmfamily,
  commentstyle=\upshape\color{black!50},
  keywordstyle=\bfseries\itshape\color{fg},
  identifierstyle=\itshape,
  stringstyle=\rmfamily,
  columns=fullflexible,
  mathescape,
  literate={<-}{{$\gets$\ }}2,
  numbers=left,
  numberstyle=\scriptsize\sffamily,
}

\newcommand\Algo[1]{\text{\textcolor{bg}{\bfseries\itshape#1}}}
\newcommand\commented[1]{\textcolor{black!50}{#1}}

\title{The Keplerian Traveling Salesperson Problem}
\author{Max Bannach \and Giacomo Acciarini \and Dario Izzo}
\date{%
  European Space Agency, Noordwijk, 2201 AZ, The Netherlands\\
  \texttt{\{max.bannach, giacomo.acciarini, dario.izzo\}@esa.int}\\[2ex]
}

\begin{document}

\maketitle

\begin{abstract}
We address a fundamental challenge in space mission design and space
logistics: planning interplanetary trajectories for missions that must
rendezvous with multiple bodies. Such mission occur, for instance, in active debris removal, in-orbit
servicing, or asteroid belt exploration. We model these problems as a
variant of the Traveling salesperson problem (\Lang{tsp}),
which we term the Keplerian \Lang{tsp} (\Lang{ktsp}). Unlike the well-studied \Lang{tsp},
the \Lang{ktsp} accounts for the motion of orbital targets, leading to
time-dependent and asymmetric transfer costs that capture key
real-world effects in astrodynamics. 

We provide a rigorous formalization of the \Lang{ktsp} and release a
benchmark suite to support its study. Central to our approach is a
time-unfolding technique that reformulates the continuous problem as a
discrete optimization task in a time-expanded network. This
representation makes the benchmark accessible to researchers in
discrete optimization even without prior knowledge of celestial
mechanics. We also develop an alternative encoding as an integer
linear program using Interval-based Dynamic Discretization Discovery
to handle the time-dependent nature of transfers. We leverage
state-of-the-art \Lang{ilp} solvers to solve the \Lang{ktsp}
instances, accompanied by a detailed computational study that
highlights their strengths and limitations. We complement these exact methods with an
initial solution heuristic, an improvement heuristic, and preprocessing
routines that preserve optimality.
\end{abstract}

%
%
\section{Introduction}

The design of multi-rendezvous trajectories is a common topic in
advanced space mission planning, with applications spanning a wide
range of fields. Missions for on-orbit servicing
\cite{daneshjou2017mission}, active debris removal
missions~\cite{zhang2024global, izzo2018kessler, cerf2013multiple},
asteroid mining~\cite{zhang2025sustainable,yarndleymultiple}, and
exploration missions targeting the main belt (e.g., NASA’s DAWN
spacecraft \cite{rayman2006dawn, li2021sequence, shen2023dyson,
  zhang2025sustainable}), all involve solving variations of this
complex problem. These diverse mission types not only require a deep
understanding of orbital mechanics but also of advanced optimization
techniques capable of handling the complex, real-world constraints
underling the problem.  Multi-rendezvous trajectory design poses
complex combinatorial challenges that have motivated benchmark
problems through the Global Trajectory Optimization Competition
\cite{casalino2014problem, izzo2018kessler, grigoriev2014gtoc5,
  zhang2025sustainable, shen2023dyson} and sparked the development of
increasingly sophisticated methods \cite{li2021sequence,
  zhang2025hierarchical, huang2025rapid,
  bang2019multitarget}. Most existing approaches tackle the problem
primarily from an astrodynamics perspective, developing tailored
algorithms and leveraging domain-specific insights. These methods typically rely on heuristics and
hyperparameters to prune the search space, often sacrificing
guarantees of global optimality and limiting connections to
general-purpose combinatorial optimization.  In this paper, we address
these limitations by tackling the problem at its foundations: We
provide a rigorous mathematical formalization, establish provable
guarantees for a proposed algorithmic framework, and clarify the
inherent trade-offs of heuristic approaches. Recognizing that the
problem's complexity depends strongly on the precise mission
objectives, we introduce and focus on a deliberately simple yet
representative formulation that captures the essential combinatorial
and dynamical aspects of multi-rendezvous mission design.

We study multi-rendezvous trajectories as graph exploration
problems. The poster child of such exploration problems is the
\emph{traveling salesperson problem} (\Lang{tsp}), which asks to find
the minimum-cost round tour in a given graph that visits every vertex
\emph{exactly once.} In our setting, the vertices of the graph are
celestial bodies orbiting a common central mass following Kepler's
laws of planetary motion. To be more specific, the setup of our
problem is a central body $M$ and a set of celestial bodies~$\Alpha$
defined by their six Keplerian elements. The objective is to find a
trajectory that starts and ends at some $\alpha_s\in\Alpha$, that
visits (i.e., matches position and velocity) with every element in
$\Alpha$ at least once, and that minimizes the cumulative velocity
change $\Delta V$.  In order to have a well-defined problem, we also
require an initial epoch $t_0$ at which the missions starts and a
final epoch $t_{\max}$ at which the trajectory should reach $\alpha_s$
the latest.  Let us collect this definition of the \emph{Keplerian
traveling salesperson problem} (\Lang{ktsp}) formally:

\begin{problem}[\Lang{ktsp}]
\begin{parameterizedproblem}
\label{problem:ktsp}
  \instance A central massive body $M$, a time window
  $(t_0,t_{\max})$, a set of Keplerian objects $\Alpha$, an
  $\alpha_s\in \Alpha$ (i.e. the starting location) and an oracle
  returning the velocity change ($\Delta V$) necessary to connect two
  $\alpha, \alpha'\in \Alpha$ at epochs $t,t' \in
  [t_0,t_{\max}]$.  \objective Find a trajectory starting at $t_0$ at
  $\alpha_s$ that visits every element in $\Alpha$ and returns to
  $\alpha_s$ no later than $t_{\max}$.  \minimize The cumulative
  $\Delta V$ (instantaneous velocity changes) accumulated along the
  trajectory.\footnote{We assume that the transfer cost from $(\alpha,t)$
  to $(\alpha',t' )$ is a function solely of $t$ and $t'$ that does not
  depend on the history of the trajectory, e.g., a Lambert arc
  \cite{izzo2016designing}. It is not relevant for the problem
  definition which transfer costs are used exactly, and we may assume
  that these costs are provided via oracle access.}
\end{parameterizedproblem}
\end{problem}

\noindent
To situate the Keplerian traveling salesperson problem within the
broader landscape of \Lang{tsp} generalizations~\cite{SallerKK23},
consider the progression shown in Figure~\ref{figure:tsp}. The
\emph{moving target} variant (\Lang{mt-tsp}) (formally introduced in
2003~\cite{helvig2003moving}) extends the classical Euclidean case by
allowing each ``city'' to move with a constant velocity. The Keplerian
variant goes a step further by modeling targets as bodies evolving
under a central gravitational field, and by introducing a generic
\emph{travel cost} that depends on the departure \emph{and} arrival
epoch. Another related formulation is the \emph{time-dependent}
version (\Lang{td-tsp}) that was introduced in 1980~\cite{fox1980n},
where arc costs vary with the \emph{departure} time. In contrast, in
\Lang{ktsp} the cost ($\Delta V$) depends explicitly on both the
starting \emph{and} arrival times, reflecting the two-point boundary
value nature of orbital transfers and introducing the possibility of
waiting times at each body. This apparently small change has two major
consequences: First, there is no analogue of the FIFO
(first-in-first-out) property that underlies most efficient algorithms
for \Lang{td-tsp}, making much of the existing literature on
heuristics and solution methods inapplicable; second, the dependence
on both endpoints of the transfer prevents a straightforward
generalization of time-dependent cost functions (see, e.g.,
\cite{vu2023solving}).

\begin{figure}[htb]
  \input{img/tsp}
  \caption{Illustration of the progression from classical Euclidean
    traveling salesperson problem (left), where cities are static points, to the
    \Lang{mt-tsp} variant (center), in which cities move with initial
    velocities, and finally to \Lang{ktsp} (right),
    where motion is governed by gravitational dynamics around a
    central mass. Velocity vectors and the central mass are shown in
    \textcolor{orange}{orange}, the paths along which the cities move
    are shown in \textcolor{blue}{blue}, a solution to each instance is
  shown in \textcolor{bg}{dark red}, whereby for variants with moving
  cities the interception points are highlighted with a
  \textcolor{red}{red} star.}
  \label{figure:tsp}
\end{figure}

\subsection{Our Contribution}

This article provides the first systematic study of the Keplerian
traveling salesperson problem. The contribution is
threefold:
\begin{description}
\item[Static and Dynamic Discretization]
We introduce and compare two \Lang{ilp} formulations for the
problem. The first relies on a static discretization using a
time-expanded network, yielding a straightforward but less efficient
encoding. The second employs interval-based dynamic discretization
discovery, allowing the model to refine the temporal discretization
adaptively and resulting in a more compact and effective formulation. 
\item[Heuristic Improvements Based on Domain Knowledge]
We enhance the performance of our \Lang{ilp} formulations using four
heuristic adjustments that accelerate runtime while preserving
optimality guarantees. First, we introduce a set of reduction rules
that simplify the problem during preprocessing. We then develop a
constructive heuristic for generating high‑quality initial solutions
to seed the \Lang{ilp} solver, as well as an improvement heuristic
that refines intermediate solutions explored during
branch-and-bound. Finally, we provide mechanisms to pre-compute a
subset of dynamically discovered constraints whenever it is certain
that they will inevitably be generated by the solver. 
\item[A Benchmark Set] We construct and openly release a benchmark set
  of diverse \Lang{ktsp} instances. The set is designed to cover a
  broad variety of problem scenarios, including missions in the
  asteroid belt and within the Jovian system. In this work, we use it
  to evaluate our approaches, and in future research it will enable
  the community to develop, compare, and benchmark algorithmic
  strategies for designing multi-rendezvous missions. 
\end{description}

\subsection{Related Work}
The travelling salesperson problem has been mentioned in previous
works related to spacecraft trajectory design. Early studies on
on-orbit servicing~\cite{bourjolly2006orbit} and space debris
removal~\cite{cerf2013multiple,izzo2015evolving} advertised the
advantages of thinking about specific spacecraft trajectory design
problems as \Lang{tsp} variants.
Since then, various algorithmic methods, primarily
heuristic-based, have been deployed to solve instances of these
problems~\cite{zhang2022timeline,hou2024traveling, federici2019time,
  lopez2022asteroid}. More recently, the use of integer linear
programming~(\Lang{ilp}) became a crucial component of the winning
strategy developed by NASA’s Jet Propulsion Laboratory (JPL) during
GTOC12 for the ``Sustainable Asteroid Mining'' challenge. A derived
methodology, inspired by JPL's success, was recently formalized and
tested in the same context~\cite{yarndleymultiple}.  Our
metaheuristics are original with this paper, but previous work
carried out in the context of GTOCs has explored similar ideas of
combining beam searches with local improvements \cite{zhang2023gtoc,
  izzo2016designing, simoes2017multi}.  The dynamic discretization of
time on which our encoding is based was recently discovered by Boland
et al.~\cite{BolandHMS17}. Following this work, it was applied in
various contexts, e.g., for train rescheduling~\cite{CroellaLMV24},
continuous-time service networks~\cite{MarshallBSH21}, supply chain
optimization~\cite{DykK24}, \Lang{tsp} with time
windows~\cite{VuHBS20}, multi-attribute two-echelon location
routing~\cite{EscobarVargasC24}, routing problem with out-and-back
routes~\cite{LagosBS22}, and time-dependent shortest path
problems~\cite{HeBNS22}.

\subsection{Structure of this Article}

In Section~\ref{sec:unroll}, we model \Lang{ktsp} using a
\emph{time-index formulation}, where continuous time is
\emph{unrolled} into a discrete grid. The following section  introduces the \emph{Dynamic
Discretization Discovery} (DDD) technique, which reduces the overall
problem complexity while maintaining mathematical guarantees on the
solution quality. To improve the scalability of our methods, we
incorporate preprocessing rules and heuristic improvements in Section~\ref{section:improvements}.
Finally, we present
a comprehensive benchmark set of \Lang{ktsp} instances in
Section~\ref{sec:benchmarkset} and evaluate our approaches on this
dataset in Section~\ref{sec:experiments}.

%
%
\section{Static Discretization via Time-Expanded Networks}
\label{sec:unroll}
There are two common methods to encode combinatorial problems with
temporal components into general-purpose methodologies:
\emph{continuous formulations} and \emph{time-indexed
formulations}~\cite{DykK24}. 
The former uses continuous variables to model the temporal
properties of the problem, while the latter discretizes
the problem using \emph{time points.} While continuous
formulations often perform poorly, time-indexed formulations struggle
with finding good trade-offs between their size (number of time
points) and their approximation
quality~\cite{CroellaLMV24}. Time-index formulations have, however,
various advantages. First, the time points are explicitly pre-computed,
which allows the pre-computation of all costs. Second, the formulation
only requires binary variables, which allows to apply various
constraint optimization techniques out of the box. Finally, the
recently introduced \emph{dynamic discretization discovery} techniques
enable it to scale more effectively than the continuous
formulation~\cite{BolandHMS17}. We will utilize dynamic discretization
discovery in
Section~\ref{section:ddd}.

\subsection{Unrolling Time with a Time-Expanded Network}
\label{section:time-expanded-networks}

The basic idea of \emph{time-index formulations} is to \emph{unroll}
the time into discrete points. In our context, we define a directed
graph $G$, called the \emph{time-expanded network}, to a \Lang{ktsp} instance $(M,\Alpha,t_0,t_{\max},\alpha_s)$
by picking a discrete, uniform, \emph{time grid} $\tgrid$ containing $t_0$ and
$t_{\max}$, with a separation $dt = (t_{\max} - t_0) / (|\tgrid| - 1)$
between points. The graph contains a vertex for every 
$\alpha\in\Alpha$ and every $t\in \tgrid$:
\[
  V(G)\coloneq \Alpha\times\tgrid.
\]
The edge set of $G$ consists of two parts: the \emph{coasting arcs} $E_C$
and the \emph{transfer arcs} $E_T$. The former just indicates that it is
possible to ``stay'' at a celestial body:
\[
  E_C\coloneq \big\{\,
  \big((\alpha,t),(\alpha,t+dt)\big)
  \mid
  \text{$\alpha\in\Alpha$, $\{t,(t+dt)\}\subseteq \tgrid$}
  \,\big\}.
\]
The transfer arcs describe feasible transfers: 
\begin{align*}
  E_T&\coloneq
  \big\{\,
  \big((\alpha,t),(\beta,t')\big)
  \mid
  \text{$\{\alpha,\beta\}\subseteq\Alpha$, $\{t,t'\}\subseteq \tgrid$, $t<t'$,}\\
  &\qquad\qquad\text{and there is a feasible transfer from $\alpha$ at
    epoch $t$ to $\beta$ at epoch $t'$}
  \,\big\}.  
\end{align*}
The edge set of $G$ now simply is $E(G)\coloneq E_C\cup E_T$.  We also
define \emph{weights} on the edges $w_G\colon
E(G)\rightarrow\mathbb{R}$ that describe the required $\Delta V$ to
perform the transfer. Hence, we have $w_G(e)=0$ for all $e\in E_C$ and
$w_G(e)\geq 0$ for $e\in E_T$.  Since we are
focusing on the \emph{global} trajectory optimization problem, we
assume oracle access to the required \(\Delta V\) for each
transfer. That is, we assume we have access to a function that
computes $w_G(e)$ solely from the pair \(\big((\alpha,t),(\beta,t')\big)\), independent
of the history of prior rendezvous. Figure~\ref{figure:time-expanded-network} illustrates the
introduced concepts and the resulting graph $G$.

\begin{SCfigure}[1.0][htbp]
  \centering
  \begin{tikzpicture}[
      label/.style = { midway, circle, fill=white, inner sep=0pt,
        font=\scriptsize\color{orange}}
    ]

    \foreach \a/\al in {1/3,2/2,3/1,4/0}{
      \node[anchor=east, baseline, color=bg] at (0.5,\a) {$\alpha_{\al}$};
      \foreach \t in {1,2,...,6}{
        \node[dot] (\a-\t) at (\t,\a) {};
      }      
    }
    \foreach [count=\c from 0] \t in {1,2,...,6}{
      \ifthenelse{\c<5}{
        \node[baseline, fg] at (\t,4.75) {$t_\c$};
      }{
        \node[baseline, fg] at (\t,4.75) {$t_{\max}$};
      }
    }

    \draw[bg, semithick, decorate,decoration={brace}]
    (-.25,.75) -- (-.25,4.25) node[midway,left] {$\Alpha$\,\,};
    \draw[fg, semithick, ->, >={[round]Stealth}]
    (1-0.25,5.25) -- node[midway, above] {Time} (6+0.25,5.25);

    \foreach \a in {1,2,3,4}{
      \foreach \t in {1,2,...,5}{
        \pgfmathtruncatemacro{\nextt}{\t+1}
        \draw[semithick, ->, >={[round,sep]Stealth}, color=gray] (\a-\t) -- (\a-\nextt);
      }
    }

    \draw[semithick, ->, >={[round,sep]Stealth}] (4-1) -- node[label] {$1$} (3-2);
    \draw[semithick, ->, >={[round,sep]Stealth}] (4-2) -- node[label] {$2$} (3-3);
    \draw[semithick, ->, >={[round,sep]Stealth}] (4-4) -- node[label] {$2$} (3-6);
    \draw[semithick, ->, >={[round,sep]Stealth}] (4-3) -- node[label] {$2$} (2-4);

    \draw[semithick, ->, >={[round,sep]Stealth}] (1-1) -- node[label] {$3$} (3-2);
    \draw[semithick, ->, >={[round,sep]Stealth}] (1-2) -- node[label] {$4$} (3-3);
    \draw[semithick, ->, >={[round,sep]Stealth}] (1-4) -- node[label] {$5$} (2-5);
    \draw[semithick, ->, >={[round,sep]Stealth}] (1-3) -- node[label] {$6$} (2-4);
    \draw[semithick, ->, >={[round,sep]Stealth}] (2-5) -- node[label] {$7$} (4-6);
    \draw[semithick, ->, >={[round,sep]Stealth}] (2-5) -- node[label] {$8$} (1-6);
    \draw[semithick, ->, >={[round,sep]Stealth}] (2-2) -- node[label] {$10$} (3-4);
    \draw[semithick, ->, >={[round,sep]Stealth}] (2-2) -- node[label] {$11$} (1-3);
    \draw[semithick, ->, >={[round,sep]Stealth}] (3-3) -- node[label] {$12$} (1-4);
    \draw[semithick, ->, >={[round,sep]Stealth}] (3-3) -- node[label] {$13$} (2-5);
  \end{tikzpicture}
  \caption{Illustration of a time-expanded network for a hypothetical
    \Lang{ktsp} instance. Every dot is a vertex of the graph, where
    rows correspond to celestial bodies and columns to
    time points. Coasting arcs are illustrated as gray arrows (they have cost zero), while feasible
    transfers are shown in black (they have a cost shown as orange number on top).}
  \label{figure:time-expanded-network}
\end{SCfigure}
A path in a time-expanded network is called a \emph{trajectory.}
Observe how a trajectory that starts at
$(\alpha_s,t_0)$, ends at $(\alpha_s,t_{\max})$, and that visits
every object at least once corresponds a solution for
\Lang{ktsp}. We call such trajectories \emph{feasible}.
For a trajectory $P$, we let $\val(P)$ be the sum of the
weights of the arcs appearing in $P$, and for a time-expanded network
$G$ we let $\val(G)$ be the minimum over the value of all feasible
trajectories in $G$.
For instance,
a solution $P$ for $\alpha_s=\alpha_0$ with $\val(P)=25$ in
Figure~\ref{figure:time-expanded-network}~is:
\begin{align*}
(\alpha_0,t_0)
\stackrel{\textcolor{orange}{1}}{\rightarrow}
(\alpha_1,t_1)
\stackrel{\textcolor{orange}{0}}{\rightarrow}
(\alpha_1,t_2)
\stackrel{\textcolor{orange}{12}}{\rightarrow}
(\alpha_3,t_3)
\stackrel{\textcolor{orange}{5}}{\rightarrow}
(\alpha_2,t_4)
\stackrel{\textcolor{orange}{7}}{\rightarrow}
(\alpha_0,t_{\max}).
\end{align*}
\begin{observation}\label{observation:time-expanded-opt}
  For $dt\rightarrow 0$, the minimum $\Delta V$
  trajectory in $G$ that is feasible corresponds to an \emph{optimal} solution
  of the corresponding (continuous) \Lang{ktsp} instance. 
\end{observation}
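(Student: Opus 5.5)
We read the observation as a convergence statement: if $G_n$ is the time-expanded network on a uniform grid $\tgrid_n$ with spacing $dt_n\to 0$, and $\mathrm{OPT}$ is the infimum of the cumulative $\Delta V$ over continuous \Lang{ktsp} solutions, then $\val(G_n)\to\mathrm{OPT}$. Optimal trajectories of the $G_n$ then approximate optimal continuous solutions. The statement is only meaningful under a regularity assumption on the oracle, so we make one explicitly. Let $c(\alpha,t,\beta,t')$ be the transfer cost and $F$ the set of feasible transfers. We assume $c$ is continuous on its feasible domain. We also assume that an optimal, or $\varepsilon$-optimal, continuous solution can be chosen whose transfer epochs lie in the interior of the feasible region, so small perturbations of the epochs keep every transfer feasible.

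\textbf{Lower bound.} For every $n$, every feasible trajectory $P$ in $G_n$ is itself a continuous solution. Coasting arcs mean staying at a body at zero cost. A transfer arc $((\alpha,t),(\beta,t'))$ is a feasible transfer with cost $w_{G_n}$ equal to the oracle value. The trajectory starts at $(\alpha_s,t_0)$, ends at $(\alpha_s,t_{\max})$, and visits every object. Hence $\val(G_n)\ge\mathrm{OPT}$ for all $n$.

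\textbf{Upper bound.} Fix $\varepsilon>0$ and a continuous solution $S$ with cost at most $\mathrm{OPT}+\varepsilon$. Describe $S$ as a finite sequence of transfers $(\alpha_i,t_i)\to(\alpha_{i+1},t_i')$ with $t_0\le t_1<t_1'\le t_2<\dots\le t_{\max}$. Finiteness is fine: only finitely many distinct bodies must be visited, and superfluous transfers can be dropped without increasing the cost, because costs are nonnegative. Round each epoch to a nearby grid point. Departures are rounded up and arrivals down, which keeps the order of epochs intact; if some $t_i<t_i'$ collapse, we strengthen the interiority assumption to a positive minimal duration. This gives a grid trajectory of the same combinatorial shape. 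Waiting intervals become chains of coasting arcs, and since the rounded arrival precedes the rounded next departure, the waiting is preserved. By interiority, each rounded transfer is feasible for $dt_n$ small, so it is an arc of $E_T$. By continuity of $c$ at the finitely many transfer points, the total cost differs from that of $S$ by less than $\varepsilon$ once $dt_n$ is small enough. So $\limsup\val(G_n)\le\mathrm{OPT}+2\varepsilon$. Combined with the lower bound this gives convergence. If one additionally wants the minimizers to converge, a compactness argument on the finitely many epochs, over a bounded number of transfers, yields a limit that is an optimal continuous solution, again using continuity.

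\textbf{Main obstacle.} The hard step is the rounding in the upper bound. It needs both regularity of the Lambert-arc cost and feasibility to be stable under small epoch perturbations. Both can fail near degenerate geometries, for example the $180^\circ$ transfer singularity, or when the optimum sits on the boundary of $F$. We handle this by approximating $S$ by a nearby $\varepsilon$-optimal solution with interior epochs before rounding, and we state this as the standing assumption under which the observation holds.
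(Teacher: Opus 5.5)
The paper gives no proof of this observation. It only remarks, just before stating it, that feasible grid trajectories are \Lang{ktsp} solutions, and it later uses the observation informally in Lemma~\ref{lemma:equivalence}. Your sandwich argument is therefore a genuine formalization rather than a reconstruction, and it is correct for the value-convergence reading.

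Your lower bound $\val(G_n)\ge\mathrm{OPT}$ is exactly the paper's remark. The proof of Lemma~\ref{lemma:nodedoubling} adds monotonicity along nested grids, so along the doubling sequence $\val(G_n)$ decreases to some limit $L\ge\mathrm{OPT}$. Nothing in the paper, however, identifies $L$ with $\mathrm{OPT}$. Your rounding step supplies exactly this missing upper bound: departures are rounded up, arrivals down, and waits are absorbed by coasting arcs. It also works for arbitrary rather than nested sequences $dt_n\to 0$. Finally, it makes visible that the observation is false without hypotheses like yours. If every near-optimal tour needs a transfer feasible only at isolated epoch pairs off the grid, or if the oracle cost jumps, the grid optimum need not approach $\mathrm{OPT}$. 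The paper's treatment buys brevity; yours buys an honest statement of when the claim actually holds.

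Two minor points. First, the ``positive minimal duration'' strengthening is superfluous. A fixed finite $S$ with all $t_i<t_i'$ already has $\delta=\min_i(t_i'-t_i)>0$, and $dt_n<\delta/2$ prevents any collapse.

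Second, your closing claim that minimizers converge does not follow from what you assume, for three reasons:
\begin{itemize}
\item Limit epochs may have vanishing transfer durations or leave $F$, which you do not assume closed, and $t<t'$ is an open condition. Continuity of the cost on $F$ gives nothing at such points.
\item The uniform bound on the number of transfers must be argued. Cutting loops is free here, since costs are nonnegative and coasting costs zero. After cutting, each body occurs at most $|\Alpha|$ times, because consecutive occurrences of a body must enclose a body visited nowhere else.
\item An optimal continuous solution need not exist at all.
\end{itemize}
Keep that part as a remark outside the proof.
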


%
%
\subsection{An Time-Indexed Integer Linear Programming Formulation for
KTSP}\label{section:csp}

It is rather straightforward to develop a time-indexed formulation for
\Lang{ktsp} using the time-expanded network $G$ and the fact that a
solution to \Lang{ktsp} corresponds to a route starting at
$(\alpha_s,t_0)$, ending at $(\alpha_s,t_{\max})$, and visiting for
every $\alpha\in\Alpha$ at least one $(\alpha,t)$ for some $t\in
\tgrid$ (Observation~\ref{observation:time-expanded-opt}). We
introduce a binary variable $x^{\alpha,\beta}_{t,t'}$ for every arc
$\big((\alpha,t),(\beta,t')\big)\in E(G)$ . The
semantics are that setting $x^{\alpha,\beta}_{t,t'}$ to true means
that the spacecraft takes the transfer from $\alpha$ to $\beta$
starting at epoch~$t$ and arriving at~$t'$. Since we want to minimize
the cumulative $\Delta V$ of the selected transfers, we have the
following objective function:
\begin{description}
\item[Objective Function] The cumulative $\Delta V$ of the selected
  transfers should be as small as possible:
  \[
  \mathrm{minimize}\quad  \sum_{\mathclap{((\alpha,t),(\beta,t'))\in E(G)}}\,\,
  x^{\alpha,\beta}_{t,t'} \cdot w_G\big((\alpha,t),(\beta,t')\big).
  \]
\end{description}
The following constraints ensure that the selected transfers ensemble
a feasible trajectory:
\begin{description}
\item[Departure Constraint] The tour \emph{departs} from every
  body at least once, i.e., for every $\alpha\in\Alpha$ there is
  at least one $\beta\in\Alpha\setminus\{\alpha\}$ and two epochs $t,t'\in \tgrid$ with  $x^{\alpha,\beta}_{t,t'}=1$. Formally: for all $\alpha\in\Alpha$:
  \[
  \sum_{\mathclap{((\alpha,t),(\beta,t'))\in E_T(G)}}\,\,x^{\alpha,\beta}_{t,t'}
  \quad
  \geq 1.
  \]
\item[Flow Constraint] Whenever the tour reaches or leaves
  an $\alpha\in\Alpha$ at some epoch $t$, it must
  leave or reach $\alpha$ at epoch $t$, i.e.,
  for all
  $\alpha\in\Alpha$ and all $t\in \tgrid$ with $(\alpha,t)\not\in\{(\alpha_s,t_0),(\alpha_s,t_{\max})\}$:
  \[
  \sum_{\mathclap{((\alpha,t),(\beta,t'))\in E(G)}}\,\,x^{\alpha,\beta}_{t,t'}
  \quad
  -
  \qquad
  \sum_{\mathclap{((\gamma,t''),(\alpha,t))\in E(G)}}\,\,x^{\gamma,\alpha}_{t'',t}
  \quad
  =0.
  \]  
\item[Vehicle Constraint] There is only one spacecraft, i.e., for the
  initial body $\alpha_s$ and epoch $t_0$ we have:
  \[
  \sum_{\mathclap{((\alpha_s,t_0),(\beta,t))\in E(G)}}\,\,x^{\alpha_s,\beta}_{t_0,t}
  \leq
  1.
  \]  
\end{description}

Let us denote with $\Gamma_{\Lang{ktsp}}$ the \emph{integer linear
program} (\Lang{ilp}) that consists of the just mentioned objective
function and constraints. The following lemma observes that an optimal
solution to $\Gamma_{\Lang{ktsp}}$ corresponds to an optimal solution
to the corresponding \Lang{ktsp} instance.

\begin{lemma}
\label{lemma:equivalence}
For $dt\rightarrow 0$, an optimal solution to $\Gamma_{\Lang{ktsp}}$ corresponds to an
optimal solution to the corresponding \Lang{ktsp}.
\end{lemma}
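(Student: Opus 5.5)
The plan is to reduce the statement to Observation~\ref{observation:time-expanded-opt}. That observation already says that, as $dt\rightarrow 0$, a minimum-$\Delta V$ feasible trajectory in $G$ corresponds to an optimal \Lang{ktsp} solution. It therefore suffices to show that, for a fixed grid $\tgrid$, the optimum of $\Gamma_{\Lang{ktsp}}$ equals $\val(G)$, and that an optimal integral solution yields a feasible trajectory of that value. I would prove two inequalities: every feasible trajectory induces a feasible assignment of no larger cost, and every feasible assignment contains a feasible trajectory of no larger cost.

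\textbf{From trajectories to assignments.} Take a feasible trajectory $P$ from $(\alpha_s,t_0)$ to $(\alpha_s,t_{\max})$. We may assume that no arc is repeated: time strictly increases along every arc, so $G$ is a DAG and $P$ is automatically a simple path. Set $x^{\alpha,\beta}_{t,t'}=1$ exactly for the arcs of $P$; the objective value is then $\val(P)$. Flow conservation holds at every internal vertex of a path. The vehicle constraint holds because $P$ leaves $(\alpha_s,t_0)$ once. For the departure constraint, observe that $P$ visits every $\alpha$. For $\alpha\neq\alpha_s$, the path enters the row of $\alpha$ by a transfer arc and must leave it by one, since it ends in the row of $\alpha_s$. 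For $\alpha_s$ itself, the path must leave its row, unless $|\Alpha|=1$, which is a trivial degenerate case.

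\textbf{From assignments to trajectories.} Take an optimal integral $x$ and let $H$ be the subgraph of selected arcs. Because $G$ is acyclic, $H$ contains no cycles. Flow conservation holds everywhere except at the source $(\alpha_s,t_0)$ and the sink $(\alpha_s,t_{\max})$. Hence $H$ decomposes into arc-disjoint paths, each running from the source to the sink. No path can start or end at an internal vertex, since in a DAG a nonzero balance only occurs at the source and sink. The vehicle constraint limits the out-degree of the source to at most $1$. The departure constraint forces $H$ to be nonempty, so $H$ is exactly one source-to-sink path $P$, with $\val(P)$ equal to the objective value.

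The departure constraint also guarantees that, for every $\alpha$, $P$ leaves some vertex $(\alpha,t)$, so every body is visited and $P$ is feasible. Combining both directions, the optimum of $\Gamma_{\Lang{ktsp}}$ equals $\val(G)$. Applying Observation~\ref{observation:time-expanded-opt} then gives the claim in the limit $dt\rightarrow 0$.

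\textbf{Main obstacle.} The delicate step is the decomposition argument, specifically ruling out stray flow that does not lie on the single tour. Two things make it work: acyclicity of $G$, which follows from $t<t'$ on transfer arcs and $dt>0$ on coasting arcs, and the fact that the flow constraint is imposed on all vertices except the two terminals. I would check carefully that the sink cannot carry extra in-flow. This holds because every unit of flow entering the sink must originate at a vertex with positive net out-flow, and the only such vertex is the source, whose out-flow is at most $1$.
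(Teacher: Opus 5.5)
Your proposal is correct and follows the paper's route. The paper likewise uses the Flow Constraint, acyclicity of $G$, the Departure Constraint and the Vehicle Constraint to conclude that an optimal assignment is a single trajectory from $(\alpha_s,t_0)$ to $(\alpha_s,t_{\max})$ visiting every body, and then invokes Observation~\ref{observation:time-expanded-opt}. Your flow-decomposition argument makes that step rigorous, and your converse direction (every feasible trajectory induces a feasible assignment of equal cost, except in the degenerate case $|\Alpha|=1$) supplies the half of the equivalence that the paper leaves implicit.
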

\begin{proof}
By the \textbf{Flow Constraint}, a route associated with a
solution to $\Gamma_{\Lang{ktsp}}$ cannot start at an
$\alpha\in\Alpha\setminus\{\alpha_s\}$ or some $t\in \tgrid\setminus\{t_0\}$. By
the \textbf{Departure Constraint}, there must be a route and, hence,
there must be a route starting at $(\alpha_s,t_0)$. Due to the
\textbf{Flow Constraint}, this trajectory ends in
$(\alpha_s,t_{\max})$. Since the time-expanded network is acyclic, the
\textbf{Departure Constraint} in conjunction with the \textbf{Vehicle
  Constraint} implies that there is a single trajectory
visiting all elements of $\Alpha$. By
Observation~\ref{observation:time-expanded-opt}, this trajectory
corresponds for $dt\rightarrow 0$ to an optimal solution for \Lang{ktsp}.
\end{proof}

\section{Dynamic Discretization via Time-Interval Networks}
\label{section:ddd}

Time-indexed encodings come with an undesirable trade-off: fine temporal grids
(small $dt$) provide solution accuracy approaching the continuous
optimum but generate intractably large problems, while coarse grids
remain computationally manageable but introduce significant
discretization errors that can render solutions meaningless for
mission design. Even worse, time-indexed encodings scale poorly with the
granularity of the time grid. Their size scales as
$O(|\Alpha|\cdot|\tgrid|)$, making it difficult even to write down the
encoding explicitly. (Note that $|\tgrid|$ may be exponential in the
input if $t_0$ and $t_{\max}$ are given in binary.)
\begin{example}
  An instance with $|\Alpha| = 20$
  objects over a 5-year mission window discretized at daily
  resolution requires $|\mathcal T| = 1825$ time points, resulting in
  a time-expanded network with $|V(G)|$ = 36500 vertices and
  millions of edges in $E(G)$. The resulting time-indexed
  formulation becomes computationally prohibitive, with memory
  requirements alone exceeding practical limits before optimization
  even begins.
\end{example}

\noindent
This quality-size trade-off can be circumvented by a strategy introduced
by Boland at al.~\cite{BolandHMS17}: \emph{Dynamic Discretization
Discovery} (DDD), which adaptively refines the temporal
discretization only where needed, i.e., when temporal constraints are
violated, rather than uniformly across the entire time horizon.

\subsection{Unrolling Time Dynamically with a Time-Interval Networks}

We follow the approach of Marshall et al.~\cite{MarshallBSH21} and implement DDD using
intervals. The idea is as follows: Instead of adding a node
$(\alpha,t)$ for every $\alpha\in\Alpha$ and every
epoch $t\in\tgrid$ to the time-expanded network, we define a \emph{coarse}
set of \emph{intervals} $\Lambda(\alpha)$ for every
$\alpha\in\Alpha$ that \emph{partitions}~$\tgrid$, i.e., for every $t\in \tgrid$
there is exactly one $\lambda\in\Lambda(\alpha)$ that contains
$t$. Recall from Section~\ref{section:time-expanded-networks} that in
a time-expanded network we connected two nodes $(\alpha,t)$ and
$(\beta,t')$ with a directed edge if there is feasible transfer from $\alpha$
at epoch $t$ reaching $\beta$ at epoch $t'$. The weight of this edge is defined as the minimum possible $\Delta V$ needed for this transfer.
In a \emph{time-interval network}, we connect a node $(\alpha,\lambda_{\alpha})$ to
$(\beta,\lambda_{\beta})$ with $\lambda_{\alpha}\in\Lambda(\alpha)$ and
$\lambda_{\beta}\in\Lambda(\beta)$ if there is a $t\in \lambda_{\alpha}$
and a $t'\in\lambda_{\beta}$ such that there is a feasible transfer from $\alpha$ at epoch $t$ that reaches $\beta$ at epoch $t’$. The weight of
this edge is the \emph{minimum} $\Delta V$ over all possible choices
of $t$ and $t'$, see Figure~\ref{figure:time-intervalled-network}.

\begin{SCfigure}[2.0][htbp]
  \centering
  \begin{tikzpicture}   
    \draw[semithick, bg, |-|] (1,4) -- (2,4);
    \draw[semithick, bg, |-|] (3,4) -- (4,4);
    \draw[semithick, bg, |-|] (5,4) -- (6,4);
    \draw[semithick, bg, |-|] (1,3) -- (4,3);
    \draw[semithick, bg, |-|] (5,3) -- (6,3);
    \draw[semithick, bg, |-|] (1,2) -- (3,2);
    \draw[semithick, bg, |-|] (4,2) -- (6,2);
    \draw[semithick, bg, |-|] (1,1) -- (6,1);

    \node[dot, bg] (a) at (1.5,4) {};
    \node[dot, bg] (b) at (3.5,4) {};
    \node[dot, bg] (c) at (5.5,4) {};
    \node[dot, bg] (d) at (2.5,3) {};
    \node[dot, bg] (e) at (5.5,3) {};
    \node[dot, bg] (f) at (2,2) {};
    \node[dot, bg] (g) at (5,2) {};
    \node[dot, bg] (h) at (3.5,1) {};
    
    \draw[bg, semithick, decorate,decoration={brace}]
    (-.25,.75) -- (-.25,4.25) node[midway,left] {$\Alpha$\,\,};
    \draw[fg, semithick, ->, >={[round]Stealth}]
    (1-0.25,4.5) -- node[midway, above] {Time} (6+0.25,4.5);

    \draw[semithick, ->, >={[round,sep]Stealth}] (a) to[bend left = 20] (b);
    \draw[semithick, ->, >={[round,sep]Stealth}] (b) to[bend left = 20] (c);
    \draw[semithick, ->, >={[round,sep]Stealth}] (d) to[bend left = 20] (e);
    \draw[semithick, ->, >={[round,sep]Stealth}] (f) to[bend left = 20] (g);
    
    \draw[semithick, ->, >={[round,sep]Stealth}] (a) -- (d);
    \draw[semithick, ->, >={[round,sep]Stealth}] (b) to[bend left=15] (e);
    \draw[semithick, ->, >={[round,sep]Stealth}] (b) -- (g);
    \draw[semithick, ->, >={[round,sep]Stealth}] (d) to[bend left=10] (h);
    \draw[semithick, ->, >={[round,sep]Stealth}] (d) -- (g);
    \draw[semithick, ->, >={[round,sep]Stealth}] (f) -- (d);
    \draw[semithick, ->, >={[round,sep]Stealth}] (f) -- (h);
    \draw[semithick, ->, >={[round,sep]Stealth}] (g) to[bend left=10] (h);
    \draw[semithick, ->, >={[round,sep]Stealth}] (g) -- (c);
    \draw[semithick, ->, >={[round,sep]Stealth}] (h) to[bend left=10] (d);
    \draw[semithick, ->, >={[round,sep]Stealth}] (h) to[bend left=10] (g);

    \begin{scope}[on background layer]
      \foreach \a/\al in {1/3,2/2,3/1,4/0}{
        \node[anchor=east, baseline, color=bg] at (0.5,\a) {$\alpha_{\al}$};
        \foreach \t in {1,2,...,6}{
          \node[dot, lightgray,  minimum width = 1mm] (\a-\t) at (\t,\a) {};
        }      
      }    
      \draw[lightgray, semithick, ->, >={[round,sep]Stealth}] (4-1) -- (3-2);
      \draw[lightgray, semithick, ->, >={[round,sep]Stealth}] (4-2) -- (3-3);
      \draw[lightgray, semithick, ->, >={[round,sep]Stealth}] (4-4) -- (3-6);
      \draw[lightgray, semithick, ->, >={[round,sep]Stealth}] (4-3) -- (2-4);
      \draw[lightgray, semithick, ->, >={[round,sep]Stealth}] (1-1) -- (3-2);
      \draw[lightgray, semithick, ->, >={[round,sep]Stealth}] (1-2) -- (3-3);
      \draw[lightgray, semithick, ->, >={[round,sep]Stealth}] (1-4) -- (2-5);
      \draw[lightgray, semithick, ->, >={[round,sep]Stealth}] (1-3) -- (2-4);
      \draw[lightgray, semithick, ->, >={[round,sep]Stealth}] (2-5) -- (4-6);
      \draw[lightgray, semithick, ->, >={[round,sep]Stealth}] (2-5) -- (1-6);
      \draw[lightgray, semithick, ->, >={[round,sep]Stealth}] (2-5) -- (2-6);
      \draw[lightgray, semithick, ->, >={[round,sep]Stealth}] (2-2) -- (3-4);
      \draw[lightgray, semithick, ->, >={[round,sep]Stealth}] (2-2) -- (1-3);
      \draw[lightgray, semithick, ->, >={[round,sep]Stealth}] (3-3) -- (1-4);
      \draw[lightgray, semithick, ->, >={[round,sep]Stealth}] (3-3) -- (2-5);
    \end{scope}
  \end{tikzpicture}
  \caption{A \emph{time-interval network} for the 
    network from Figure~\ref{figure:time-expanded-network}. The
    epochs and transfer arcs of the original network are shown in
    light gray. The intervals in red, and the edges of the time-interval network in black.}
  \label{figure:time-intervalled-network}
\end{SCfigure}

\noindent
We can use the \Lang{ilp} from Section~\ref{section:csp}
directly on time-interval networks rather than time-indexed
networks. Let $G$ be the time-expanded
network for a given \Lang{ktsp} instance, and let $I$ be a
time-interval network for an arbitrary partition of the time window of
each $\alpha\in\Alpha$. Let further
$\Gamma_{\Lang{ktsp}}$ and $\Gamma_I$ be the corresponding \Lang{ilp}s
initialized on these networks, respectively. Then we
have:

\begin{lemma}\label{lemma:interval-lb}
  The optimal solution for $\Gamma_I$ is a
  lower bound for the optimal solution of $\Gamma_\Lang{ktsp}$.
\end{lemma}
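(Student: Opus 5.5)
We prove the lemma by a projection argument. Take any feasible solution of $\Gamma_{\Lang{ktsp}}$, that is, a feasible trajectory $P$ in the time-expanded network $G$. We map it onto a feasible solution of $\Gamma_I$ whose objective value is at most $\val(P)$. Applying this to an optimal $P$ then gives $\val(I)\le\val(G)$, which is the claim. (If $\Gamma_{\Lang{ktsp}}$ is infeasible, the statement holds trivially.)

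\textbf{The projection.} Since each $\Lambda(\alpha)$ partitions $\tgrid$, there is a well-defined map $\pi(\alpha,t)=(\alpha,\lambda)$, where $\lambda\in\Lambda(\alpha)$ is the unique interval containing $t$.

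\begin{itemize}
\item \emph{Transfer arcs.} Let $\big((\alpha,t),(\beta,t')\big)$ be a transfer arc of $P$. By definition of the time-interval network there is an arc $\pi(\alpha,t)\to\pi(\beta,t')$, because $t$ and $t'$ witness a feasible transfer. Its weight is the minimum $\Delta V$ over all feasible pairs in the two intervals, so it is at most $w_G\big((\alpha,t),(\beta,t')\big)$.
\item \emph{Coasting arcs.} A coasting arc $\big((\alpha,t),(\alpha,t+dt)\big)$ either stays inside one interval, in which case we contract it away, or it crosses into the next interval of $\Lambda(\alpha)$. In the latter case we map it to the interval-network coasting arc between these consecutive intervals, which has weight zero.
\end{itemize}

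Setting the variables of $\Gamma_I$ according to the images of the arcs of $P$ yields a walk in $I$ from $\pi(\alpha_s,t_0)$ to $\pi(\alpha_s,t_{\max})$. Its cost is at most $\val(P)$.

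\textbf{Checking the constraints of $\Gamma_I$.}

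\begin{itemize}
\item The \textbf{Departure Constraint} carries over directly: $P$ leaves every $\alpha$ via some transfer arc, and the image of that arc is a transfer arc in $I$ leaving $\alpha$.
\item The \textbf{Flow Constraint} holds because the image is a walk. Each intermediate interval node is entered and left equally often, since consecutive arcs of $P$ share endpoints and therefore so do their images.
\item The \textbf{Vehicle Constraint} holds because $P$ leaves $(\alpha_s,t_0)$ only once.
\end{itemize}

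\textbf{Main obstacle: the image may not be a simple path.} Two things can go wrong. First, $I$ is not acyclic: the figure shows arcs such as $(h)\to(d)$ and $(d)\to(h)$. Second, the image walk may traverse the same interval arc more than once, so a binary variable would need the value $2$.

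My first attempt is to argue that the $\Gamma_I$ formulation permits such solutions, since it is only a relaxation and the lemma only needs a feasible point of value at most $\val(P)$. If arc multiplicities are an issue, I will shortcut repeated cycles in the walk. All weights are nonnegative, so removing a closed subwalk does not increase the cost. However, a removed cycle might contain the only departure from some body. I handle this by deleting only cycles whose removal keeps at least one departure from every body. If that is impossible, I instead note that $\Gamma_I$ is written with $\ge$ and flow equalities, so a multigraph solution is acceptable when the variables are interpreted as integers.

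I expect this bookkeeping, rather than the weight comparison, to be the delicate step.
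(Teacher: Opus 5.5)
Your main line is the paper's proof. You map each arc of an optimal trajectory $P$ to the arc of $I$ between the intervals containing its endpoints, and its weight is a minimum over a set that contains $w_G(e)$. The paper stops there. The gap is in the constraint check you add on top, and it is a real defect, not bookkeeping.

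Your \textbf{Vehicle Constraint} step is false. $P$ leaves $(\alpha_s,t_0)$ once, but each time $P$ returns to $\alpha_s$ at an epoch of the interval $\lambda\ni t_0$ and moves on, the image leaves $(\alpha_s,\lambda)$ again. Concretely, take $t_0<t_1<t_2<t_3<t_4=t_{\max}$ and let the only transfers be $\alpha_s\to\beta$ on $(t_0,t_1)$, $\beta\to\alpha_s$ on $(t_1,t_2)$, $\alpha_s\to\gamma$ on $(t_2,t_3)$ and $\gamma\to\alpha_s$ on $(t_3,t_4)$. Give $\beta$ and $\gamma$ one interval each, and put $t_0,t_2$ in the same interval $\lambda$ of $\alpha_s$. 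Then $\Gamma_{\Lang{ktsp}}$ is feasible. In $\Gamma_I$, however, the Departure and Flow Constraints at $\beta$ and $\gamma$ force both arcs $(\alpha_s,\lambda)\to(\beta,\cdot)$ and $(\alpha_s,\lambda)\to(\gamma,\cdot)$ to $1$, which violates the Vehicle Constraint. So $\Gamma_I$ has no feasible point at all, and neither shortcutting nor integer multiplicities can help.

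Your multiplicity repair is also not a proof. You give no argument that suitable closed subwalks can always be deleted, and reading the variables as integers changes $\Gamma_I$, whose variables are binary. Deletion can genuinely be impossible. Let the only transfers form the chain $\alpha_s\to\beta\to\gamma\to\delta\to\beta\to\gamma\to\alpha_s$ on consecutive epochs $t_0<\dots<t_6=t_{\max}$, with $\Lambda(\alpha_s)=\{\{t_0\},\{t_1,\dots,t_6\}\}$ and a single interval for each of $\beta,\gamma,\delta$. The (unique) feasible trajectory projects to a walk that uses $(\beta,\cdot)\to(\gamma,\cdot)$ twice. In binary $\Gamma_I$, the Departure Constraints for $\alpha_s$ and $\delta$ force both arcs entering $(\beta,\cdot)$ to $1$, while its only outgoing arc carries at most $1$. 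So $\Gamma_I$ is again infeasible.

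Hence the step ``some feasible point of $\Gamma_I$ has value at most $\val(P)$'' fails for the formulation as written. This holds for your write-up and for the paper's one-line proof alike. The projection argument becomes a complete proof only under an extra hypothesis, which you should state up front. One option is integer instead of binary arc variables in $\Gamma_I$, together with $\{t_0\}$ being its own interval of $\alpha_s$; no arc of $G$ enters $(\alpha_s,t_0)$. Another is to assume that some optimal trajectory projects to a walk that uses every arc of $I$ at most once and leaves $\pi(\alpha_s,t_0)$ only once.
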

\begin{proof}
  Follows from the observation that for every solution in $\Gamma_{\Lang{ktsp}}$ there
  is a solution in $\Gamma_I$ with the same or a smaller value. The
  reason is that for any edge $\big((\alpha,t),(\beta,t')\big)$ in
  $G$ there are intervals $\lambda_{\alpha}\in\Lambda(\alpha)$ and
  $\lambda_{\beta}\in\Lambda(\beta)$ with $t\in\lambda_{\alpha}$ and
  $t'\in\lambda_{\beta}$ such that the edge
  $\big((\alpha,\lambda_{\alpha}),(\beta,\lambda_{\beta})\big)$ is
  present in $I$ and its weight is upper bounded by the weight of the
  edge $\big((\alpha,t),(\beta,t')\big)$.
\end{proof}

\noindent
Unfortunately, a solution of $\Gamma_I$ does not necessarily
correspond to a \emph{feasible} trajectory. The reason is
twofold. First, since we connect intervals by the best possible
transfer from any epoch within the interval, a trajectory may leave
an interval before it arrives, see the left side of
Figure~\ref{figure:ddd-glitches}. Second, since the time-interval
network is not acyclic, a solution to $\Gamma_I$ may contain
unconnected subtours (Figure~\ref{figure:ddd-glitches}, right).

\begin{figure}[h]
  \begin{tikzpicture}
    \draw[bg, semithick, |-|] (0, 0) -- (4,0);
    \draw[bg, semithick, |-|] (1,-1) -- (3,-1);
    \draw[bg, semithick, |-|] (3,-2) -- (6,-2);
    \node[bg, anchor=south] at (2,0)      {$\lambda_\alpha$};
    \node[bg, anchor=south] at (2.2,-1)   {$\lambda_\beta$};
    \node[bg, anchor=south] at (4.7,-1.9) {$\lambda_\gamma$};
    
    \node[anchor=east, baseline, color=bg] at (-0.25,0)  {$\alpha$};
    \node[anchor=east, baseline, color=bg] at (-0.25,-1) {$\beta$};
    \node[anchor=east, baseline, color=bg] at (-0.25,-2) {$\gamma$};
    
    \node[dot] (s) at (1, 0) {};
    \node[dot] (a) at (2,-1) {};
    \node[dot] (b) at (1.5,-1) {};
    \node[dot] (t) at (5,-2) {};

    \draw[semithick, ->, >={[round,sep]Stealth}] (s) -- (a);
    \draw[semithick, ->, >={[round,sep]Stealth}] (b) -- (t);
    
    \draw[fg, semithick, ->, >={[round]Stealth}]
    (0,0.7) -- node[midway, above] {Time} (6+0.25,0.7);
    
  \end{tikzpicture}
  \hfill
  \begin{tikzpicture}
    \draw[bg, semithick, |-|] (0, 0) -- (6,0);
    \draw[bg, semithick, |-|] (0,-1) -- (2.75,-1);
    \draw[bg, semithick, |-|] (3.25,-1) -- (6,-1);
    \draw[bg, semithick, |-|] (0,-2) -- (6,-2);
    \node[bg, anchor=south] at (3,0)      {$\lambda_\alpha$};
    \node[bg, anchor=south] at (3,-2) {$\lambda_\gamma$};
    
    \node[anchor=east, baseline, color=bg] at (-0.25,0)  {$\alpha$};
    \node[anchor=east, baseline, color=bg] at (-0.25,-1) {$\beta$};
    \node[anchor=east, baseline, color=bg] at (-0.25,-2) {$\gamma$};
    
    \node[dot] (s) at (0.5, -2)  {};
    \node[dot] (a) at (1.25,-1)  {};
    \node[dot] (b) at (2.5,0)    {};
    \node[dot] (c) at (4.2,-1)   {};
    \node[dot] (t) at (5,-2)     {};

    \draw[semithick, ->, >={[round,sep]Stealth}] (s) -- (a);
    \draw[semithick, ->, >={[round,sep]Stealth}] (a) -- (b);
    \draw[semithick, ->, >={[round,sep]Stealth}] (b) -- (c);
    \draw[semithick, ->, >={[round,sep]Stealth}] (c) -- (t);
    
    \draw[fg, semithick, ->, >={[round]Stealth}]
    (0,0.7) -- node[midway, above] {Time} (6+0.25,0.7);
    
  \end{tikzpicture}
  \caption{\textbf{Left:} Intervals $\lambda_\alpha$,
    $\lambda_\beta$, and $\lambda_\gamma$ corresponding to
    bodies $\alpha,\beta,\gamma$. The $x$-axis illustrates time, and black
    dots within the intervals indicate the time points used for the transfer between the intervals. A solution of~$\Gamma_I$
    can use both edges, while a solution of $\Gamma_{\Lang{ktsp}}$
    cannot. \textbf{Right:} A subtour in a time-interval network starting and ending at
    $\lambda_\gamma$. The \textbf{Flow Constraint} does \emph{not}
    exclude solutions that select this cycle, even if it
    is never reached from $\alpha_s$.}
  \label{figure:ddd-glitches}
\end{figure}

\subsection{Dynamic Generation of Intervals and Constraints}

Let us recap our findings so far: With $\Gamma_{\Lang{ktsp}}$ we have
a time-indexed formulation for \Lang{ktsp} that can find the optimum
via Observation~\ref{observation:time-expanded-opt}. With the
time-interval formulation $\Gamma_I$, we have a smaller \Lang{ilp}
that produces a lower bound via Lemma~\ref{lemma:interval-lb}, but
which may produce an \emph{infeasible} solution due to the issues
illustrated in Figure~\ref{figure:ddd-glitches}.  These considerations
lead to the ``dynamic'' in \emph{dynamic discretization discovery:} We
start with a very coarse interval partition of $\tgrid$ and solve the
corresponding $\Gamma_I$ (which hopefully is easy, since
the time-interval network is small). If the solution is
feasible, Lemma~\ref{lemma:interval-lb} implies it is optimal, and the
algorithm terminates. If the solution is \emph{not} feasible, it
either contains a temporal glitch or a subtour. In the first case, we
subdivide the violating interval (in Figure~\ref{figure:ddd-glitches}
this is $\lambda_\beta$) into smaller intervals (we ``discover
discretization'') and rerun the algorithm. In the second case, we add
a standard subtour-elimination constraint, for example the Dantzig,
Fulkerson, and Johnson inequality~\cite{DantzigFJ54}, and rerun the
algorithm. This strategy can be understood as simultaneously
applying \emph{column} and \emph{row}
generation~\cite{ChvatalCDFJ10}~--~new variables (i.e., columns) to
split intervals, and constraints (i.e., rows) to eliminate
subtours. Figure~\ref{figure:ddd} illustrates the algorithm and
the following theorem collects the insights established within this section:
\begin{theorem}
  The DDD algorithm presented in Figure~\ref{figure:ddd}
  computes an optimal solution of \Lang{ktsp} for $dt\rightarrow 0$.
\end{theorem}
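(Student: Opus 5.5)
The proof has two parts: correctness at termination, and termination itself. Throughout, $\tgrid$ is the fixed grid of the time-expanded network $G$, and every partition $\Lambda(\alpha)$ maintained by the algorithm partitions $\tgrid$. In the limit $dt\rightarrow 0$ we may then rely on Observation~\ref{observation:time-expanded-opt} and Lemma~\ref{lemma:equivalence}: $\val(G)$, the optimum of $\Gamma_{\Lang{ktsp}}$, is the optimum of the continuous \Lang{ktsp}. It therefore suffices to show that the DDD algorithm returns a feasible trajectory of $G$ with value $\val(G)$.

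\textbf{Correctness at termination.} The algorithm stops only when the optimal solution $x^*$ of the current $\Gamma_I$ has no temporal glitch and no subtour. I would first make this precise by choosing, for every selected arc $((\alpha,\lambda_\alpha),(\beta,\lambda_\beta))$, epochs $t\in\lambda_\alpha$ and $t'\in\lambda_\beta$ that attain its minimum weight. The absence of a glitch then means that along the chain of selected arcs, each departure epoch is no earlier than the preceding arrival epoch at the same body. The absence of a subtour means the selected arcs form a single connected walk from the interval of $(\alpha_s,t_0)$.

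Under these two conditions I insert coasting arcs of $G$ between each arrival epoch and the next departure epoch. This yields a trajectory $P$ in $G$ from $(\alpha_s,t_0)$ to $(\alpha_s,t_{\max})$. It visits every body by the \textbf{Departure Constraint}, and $\val(P)$ equals the objective of $x^*$ because coasting arcs cost zero. Lemma~\ref{lemma:interval-lb} gives $\val(x^*)\le \val(G)$, and since $P$ is feasible in $G$ we also have $\val(G)\le\val(P)=\val(x^*)$. Hence $P$ is optimal.

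One point needs care: adding Dantzig--Fulkerson--Johnson cuts must not invalidate Lemma~\ref{lemma:interval-lb}. I would check that each added cut is valid for the image of every feasible $G$-trajectory under the interval map used in that lemma. This holds because such an image is connected to the start and so satisfies the subtour inequalities.

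\textbf{Termination.} Each iteration either adds a subtour-elimination constraint or strictly refines some $\Lambda(\alpha)$ by splitting a violating interval into pieces that are nonempty subsets of $\tgrid$. Both sets are finite for fixed $dt$: there are finitely many DFJ constraints over the finite vertex set, and finitely many refinements since $|\tgrid|<\infty$. So the algorithm cannot run forever, provided every infeasible solution actually triggers one of the two actions; the refinement step must strictly refine at least one partition and must not add a cut that is already present. In the worst case all intervals become singletons, and then $I$ coincides with $G$ up to coasting arcs. $I$ is acyclic, so no subtours exist, and no glitches are possible because each interval is a single epoch. The algorithm then solves $\Gamma_{\Lang{ktsp}}$ itself and terminates with its optimum.

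\textbf{Main obstacle.} The hardest step is the dichotomy: every infeasible $x^*$ exhibits either a glitch or a subtour, and the prescribed split provably removes the glitch. The split guarantees this only if it separates the arrival epoch from the departure epoch, for example by splitting $\lambda_\beta$ at the arrival epoch. I would formalize this by arguing that after the split, the offending pair of arcs can no longer both be chosen with the violating epochs. At worst the cycle of refinements ends with singleton intervals, where the termination argument above applies.
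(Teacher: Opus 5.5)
Your proposal is correct and follows the same route as the paper's argument in Section~\ref{section:ddd}. Lemma~\ref{lemma:interval-lb} makes every $\Gamma_I$ optimum a lower bound, a glitch-free and subtour-free optimum yields a feasible trajectory of equal value, otherwise one refines or cuts, and Observation~\ref{observation:time-expanded-opt} handles $dt\rightarrow 0$. You also spell out termination and the validity of the cuts, both of which the paper leaves implicit.

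One caveat on cut validity. Your ``connected to the start'' argument justifies cuts in connectivity form, e.g.\ $\sum_{e\in\delta^-(S)}x_e\ge x_f$ for an arc $f$ inside a node set $S$ that avoids the start interval. It does not justify the DFJ packing form $\sum_{e\in A(S)}x_e\le|S|-1$ over the arcs $A(S)$ inside $S$. A legitimate trajectory that revisits an interval node violates the packing form, e.g.\ $\beta\to\gamma\to\beta$ within the same two intervals. The cuts must therefore be taken in connectivity form.
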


\begin{SCfigure}[2.0][htbp]
  \centering
  \scalebox{0.7}{\begin{tikzpicture}[
      box/.style = {
        inner sep = 0pt,
        minimum width=6cm,
        minimum height=1cm,
        baseline,
        anchor=west,
        fill=bg!50,
        rounded corners
      },
      io/.style = {
        fill = none,
        text centered
      },
      flow/.style = {
        ->,
        >={[sep,round]Stealth},
        semithick
      },
      decision/.style = {
        fill = fg!50,
      },
      result/.style = {
        circle,
        fill=white,
        inner sep=0pt,
        font=\small
      }
    ]

    \node[io] (input) at (10,-2)  {$(M,\Alpha,t_0,t_{\max},\alpha_s)$};
    \node[box, text width=5cm, minimum height=1.4cm] (init)  at (0,-2) {\raggedright\small\linespread{0.9}\selectfont%
      \begin{itemize}[noitemsep, nosep]
      \item compute $\tgrid$
      \item compute initial interval partition
      \end{itemize}%
    };
    \node[box]           (refine)    at (0,-4)  {refine interval partition};
    \node[box]           (solve)     at (0,-6)  {solve $\Gamma_I$};
    \node[box, decision] (done)      at (0,-8)  {Feasible solution found?};
    \node[box, io]       (output)    at (0,-10) {optimal trajectory};
    \node[box, decision] (subtour)   at (7,-8)  {Solution contains subtour?};
    \node[box]           (subtour-c) at (7,-6)  {add subtour-elimination constraint};

    \draw[flow] (input)     -- (init);
    \draw[flow] (init)      -- (refine);
    \draw[flow] (refine)    -- (solve);
    \draw[flow] (solve)     -- (done);
    \draw[flow,>={[round]Stealth}] (done)      -- node[pos=0.4, result] {yes} (output);
    \draw[flow] (done)      -- node[pos=0.4, result]  {no}  (subtour);
    \draw[flow] (subtour)   -- node[pos=0.4, result] {yes} (subtour-c);
    \draw[flow] (subtour-c) -- (solve);
    \draw[flow] (subtour)   -- ++(4,0) |- (refine);
    \node[result] at (14,-6) {no};
  \end{tikzpicture}}
  \caption{The DDD algorithm for \Lang{ktsp} as
    schematic flowchart. Red boxes describe processes, while green
    boxes define decisions. The concrete realization of the initial
    interval partition and the refinement step is not relevant, as
    long as the refinement partitions the interval that is
    participating in the conflict.}
  \label{figure:ddd}
\end{SCfigure}

%
%
\section{Improvements Based on Domain Knowledge}
\label{section:improvements}

Both the time-indexed formulation $\Gamma_{\Lang{ktsp}}$ and the
time-interval formulation $\Gamma_{I}$ can be handed to an
off-the-shelf \Lang{ilp} solver like Gurobi~\cite{gurobi}. The
technology behind these tools made impressive improvements in the last
decades~\cite{ClautiauxL25} and, as we will see in the next sections,
can solve many \Lang{ktsp} instances directly. However, if the instances grow larger, these
general-purpose solvers may reach their limits. There are various ways we can
support \Lang{ilp} solvers in such scenarios with domain knowledge:
\begin{description}
\item[Preprocessing] Based on our knowledge about the origin of the
  formulation, we can apply domain specific reduction rules to
  simplify the problem before we solve it.
\item[Constructive Heuristics] A heuristic may generate an initial
  solution, which can kick-start the general-purpose solver.
\item[Improving Heuristics] Another heuristic may improve intermediate
  solutions found by the \Lang{ilp} solver.
\item[Pre-computation of Constraints] In a dynamic setting like DDD,
  we may pre-compute some constraints from which we expect that they
  will be discovered anyway. 
\end{description}
\noindent
It is important to note that, even if these approach contain heuristic
elements, they do not alter the optimal solution of the underlying
problem. Hence, solving the \Lang{ilp}s with such domain-specific
improvements still guarantees to find the optimal trajectory. In this section, we
propose implementations of all four strategies for \Lang{ktsp}. Since,
the formal definition of \Lang{ktsp} and the \Lang{ilp}
formulations lie at the heart of this article,
we will only sketch the ideas within the main text and leave the
technical details to the appendix.

\subsection{Simplifying the Problem via Preprocessing}
\label{section:kernelization}

Reduction rules are a tool from fixed-parameter
theory~\cite{CyganFKLMPPS15} that obtain as input an instance of the
problem at hand (here, a time-expanded network $G$) and are
either \emph{applicable} or not. If they are applicable, they return
a solution-equivalent instance (a network $G'$) that is
``simpler,'' which usually just means smaller. We utilize the
following four rules, whereby $\textit{ub}$ is an upper bound on the
costs of an optimal trajectory that can be provided either by a
heuristic or by a domain expert.

\begin{reductionrule}[Heavy Arc Rule]\label{rule:heavy-arc}
  Applicable if there is an arc $e\in E(G)$ with $w(e)>\textit{ub}$. Delete $e$.
\end{reductionrule}
\begin{reductionrule}[Vee Rule]\label{rule:vee}
  Applicable if there is an arc $e=\big((\alpha,t),(\beta,t')\big)\in
  E(G)$ with $\beta\neq\alpha_s$ such that for
  all arcs $\tilde e=\big((\beta,t''),(\gamma,t''')\big)\in E(G)$ with
  $t''\geq t'$ and 
  $\gamma\neq\beta $ it
  holds that $w(e)+w(\tilde e)> \textit{ub}$. Delete $e$.
\end{reductionrule}
\begin{reductionrule}[Shortcut Rule]\label{rule:shortcut}
  Applicable if there is an arc $e=\big((\alpha,t),(\beta,t')\big)\in
  E(G)$ such that the
  $\mathrm{dist}\big((\alpha,t),(\beta,t')\big)<w(e)$. Delete $e$.
\end{reductionrule}
\begin{reductionrule}[Far Away Rule]\label{rule:far-away}
  Applicable if there is an arc $e=\big((\alpha,t),(\beta,t')\big)\in
  E(G)$ such that we have
  \(\mathrm{dist}\big((\alpha_s,t_0),(\alpha,t)\big)+w(e)+\mathrm{dist}\big((\beta,t'),(\alpha_s,t_{\max})\big)>\textit{ub}.\)
  Delete $e$.
\end{reductionrule}

The last three rules are illustrated in
Figure~\ref{figure:simplification rules} with an example. The
following theorem establishes the correctness of the rules and is formally
proven in Appendix~\ref{appendix:presolve}.

\noindent
\begin{theorem}
  \label{theorem:presolve}
  The reduction rules~\ref{rule:heavy-arc}--\ref{rule:far-away} are
  safe. Rule~\ref{rule:heavy-arc} can be applied exhaustivly in linear
  time, Rule~\ref{rule:vee} in time $O(|\Alpha|\cdot|\tgrid|)$, and
  rules~\ref{rule:shortcut} and~\ref{rule:far-away} in time $O(|V(G)|^3)$.
\end{theorem}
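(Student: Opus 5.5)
Safety here means: for each rule, deleting the arc $e$ does not change $\val(G)$ whenever $\val(G)\le\textit{ub}$. (If $\textit{ub}$ is a valid upper bound this always holds, since $\textit{ub}$ is witnessed by some feasible trajectory.) It is enough to show that some optimal feasible trajectory avoids $e$. Deleting arcs can only increase $\val$, and it cannot make every optimum disappear if one optimum survives. Throughout I use that all weights are nonnegative and coasting arcs cost zero. I will treat each rule in turn and count time separately for each.

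\textbf{Heavy Arc and Far Away.} For Rule~\ref{rule:heavy-arc}, any trajectory containing $e$ has value at least $w(e)>\textit{ub}\ge\val(G)$, so no optimum uses $e$. Rule~\ref{rule:far-away} is the same argument. A feasible trajectory through $e$ splits into a $(\alpha_s,t_0)$--$(\alpha,t)$ path, the arc $e$, and a $(\beta,t')$--$(\alpha_s,t_{\max})$ path. Its value is therefore at least $\mathrm{dist}((\alpha_s,t_0),(\alpha,t))+w(e)+\mathrm{dist}((\beta,t'),(\alpha_s,t_{\max}))>\textit{ub}$.

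\textbf{Vee.} For Rule~\ref{rule:vee}, take a feasible trajectory $P$ using $e$. Since $\beta\ne\alpha_s$, $P$ cannot end at $(\beta,t')$; it must eventually leave body $\beta$ by a transfer arc $\tilde e=((\beta,t''),(\gamma,t'''))$ with $t''\ge t'$ and $\gamma\ne\beta$. Any coasting arcs in between have weight $0$, and $\tilde e$ is distinct from $e$. So $\val(P)\ge w(e)+w(\tilde e)>\textit{ub}$.

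\textbf{Shortcut.} Rule~\ref{rule:shortcut} is the delicate one. If an optimal $P$ uses $e$, replace $e$ by a shortest $(\alpha,t)$--$(\beta,t')$ path $Q$. The result is strictly cheaper and still a walk from $(\alpha_s,t_0)$ to $(\alpha_s,t_{\max})$. Because $G$ is acyclic and time strictly increases along $Q$, the spliced walk is a path. It visits every body that $P$ visited: $\alpha$ and $\beta$ remain on it, and the bodies on $Q$ only add to the visited set. Hence it is feasible and strictly cheaper than $P$, contradicting optimality, so no optimum uses $e$. One subtlety must be checked: $\mathrm{dist}$ must be computed in the current graph, and the rule applied one arc at a time. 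Then $Q$ avoids $e$ (it is strictly shorter than $w(e)$), and arcs deleted earlier are not needed. The main obstacle is applying the rule exhaustively. Deleting one arc can increase distances, so a simultaneous deletion could destroy all shortcuts. I would argue that no arc whose shortcut is witnessed by some path becomes unwitnessed. Replace a deleted arc on a witness path by its own strictly shorter witness, and iterate. This terminates by acyclicity and the strict decrease in length. So all distances are preserved, and deletions may even be simultaneous.

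\textbf{Running times.} Rule~\ref{rule:heavy-arc} is one scan over the arcs. For Rule~\ref{rule:vee}, for each $(\beta,t')$ compute the suffix minimum $m(\beta,t')=\min w(\tilde e)$ over departures from $\beta$ at times $\ge t'$. This is done by sweeping $\tgrid$ backwards per body, using the per-vertex minimum outgoing transfer weight. The suffix sweep takes $O(|\Alpha|\cdot|\tgrid|)$ given those minima. Each arc is then tested in $O(1)$ by checking $w(e)+m(\beta,t')>\textit{ub}$. For Rules~\ref{rule:shortcut} and~\ref{rule:far-away}, compute all-pairs distances with Floyd--Warshall in $O(|V(G)|^3)$. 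Each arc is then tested in $O(1)$, and $|E(G)|\le|V(G)|^2$. For the Far Away Rule, deletions never lower distances, and the preserved-distance argument above is not required. Recomputing distances after deletions can only enable more applications, but one round already realizes the stated bound.
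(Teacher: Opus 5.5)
Your proposal is correct and follows the paper's proof essentially step for step. You use the same cost-bounding arguments for Rules~\ref{rule:heavy-arc}, \ref{rule:vee} and~\ref{rule:far-away}, the same exchange argument for Rule~\ref{rule:shortcut}, and the same implementations (one arc scan, a backward suffix-minimum sweep per body, Floyd--Warshall plus an arc scan). Your checks that the spliced walk is still a feasible path, and that flagged arcs never lie on shortest paths (so one Shortcut round is genuinely exhaustive), go beyond what the paper writes.

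The one loose end, which you share with the paper, is Rule~\ref{rule:vee}. Testing every arc once against suffix minima of the unreduced network is not literally exhaustive: deleting an arc leaving $\beta$ can make an arc entering $\beta$ deletable. Deciding arcs in decreasing order of departure epoch, and updating the minima only with surviving arcs, repairs this. Separately, the arc tests themselves cost $O(|E(G)|)$, which the bound $O(|\Alpha|\cdot|\tgrid|)$ does not cover.
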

\begin{figure}[htb]
  \centering
  \begin{tabular}{l p{0.35\textwidth}@{}}
    \begin{tikzpicture}[
      label/.style = { midway, circle, fill=white, inner sep=0pt,
        font=\scriptsize\color{orange}}
    ]

    \foreach \t in {1,2,...,6}{
      \node[dot] (\t) at (\t,0) {};
    }      
    \foreach \t in {1,2,...,5}{
      \pgfmathtruncatemacro{\nextt}{\t+1}
      \draw[semithick, ->, >={[round,sep]Stealth}, color=gray] (\t) -- (\nextt);
    }

    \draw[semithick, <-, >={[round,sep]Stealth}] (1) -- node[label] {$1$} ++(-0.55,0.55);
    \draw[semithick, ->, >={[round,sep]Stealth}] (1) -- node[label] {$1$} ++(0.55,0.55);
    \draw[thick, color=bg, <-, >={[round,sep]Stealth}] (2) -- node[label] {$2$} ++(-0.4,0.8);
    \draw[semithick, ->, >={[round,sep]Stealth}] (2) -- node[label] {$4$} ++(0.45,0.8);
    \draw[semithick, ->, >={[round,sep]Stealth}] (2) -- node[label] {$6$} ++(0.75,0.5);
    \draw[thick, color=bg, ->, >={[round,sep]Stealth}] (3) -- node[label] {$3$} ++(0.6,0.6);
    \draw[semithick, ->, >={[round,sep]Stealth}] (4) -- node[label] {$5$} ++(0.5,0.75);
    \draw[semithick, ->, >={[round,sep]Stealth}] (4) -- node[label] {$4$} ++(0.75,0.45);
    \draw[semithick, ->, >={[round,sep]Stealth}] (5) -- node[label] {$7$} ++(0.6,0.6);
    
  \end{tikzpicture}  &
    \textbf{(a)} We can safely discard the incoming \textcolor{bg}{red} arc if $ub < 2+3 $. \\[8ex]
    \begin{tikzpicture}[
      label/.style = { midway, circle, fill=white, inner sep=0pt,
        font=\scriptsize\color{orange}}
    ]

    \foreach \a in {1,2,3,4}{
      \foreach \t in {1,2,...,6}{
        \node[dot] (\a-\t) at (\t,\a) {};
      }      
    }

    \foreach \a in {1,2,3,4}{
      \foreach \t in {1,2,...,5}{
        \pgfmathtruncatemacro{\nextt}{\t+1}
        \draw[semithick, ->, >={[round,sep]Stealth}, color=gray] (\a-\t) -- (\a-\nextt);
      }
    }

    \draw[thick, color=bg, ->, >={[round,sep]Stealth}] (4-1) -- node[label] {$18$} (3-6);

    \draw[thick, color=fg, ->, >={[round,sep]Stealth}] (4-1) -- node[label] {$2$} (2-2);
    \draw[thick, color=fg, ->, >={[round,sep]Stealth}] (2-2) to node[label] {$5$} (1-4);
    \draw[thick, color=fg, ->, >={[round,sep]Stealth}] (1-4) to node[label] {$0$} (1-5);
    \draw[thick, color=fg, ->, >={[round,sep]Stealth}] (1-5) to node[label] {$4$} (3-6);

    \draw[semithick, ->, >={[round,sep]Stealth}] (4-3) -- node[label] {$2$} (2-4);
    \draw[semithick, ->, >={[round,sep]Stealth}] (1-1) -- node[label] {$3$} (3-2);
    \draw[semithick, ->, >={[round,sep]Stealth}] (1-2) -- node[label] {$4$} (3-3);
    \draw[semithick, ->, >={[round,sep]Stealth}] (1-4) -- node[label] {$5$} (2-5);
    \draw[semithick, ->, >={[round,sep]Stealth}] (2-5) -- node[label] {$10$} (3-6);
    \draw[semithick, ->, >={[round,sep]Stealth}] (2-5) -- node[label] {$8$} (1-6);
    \draw[semithick, ->, >={[round,sep]Stealth}] (2-4) to[bend left] node[label] {$10$} (4-6);
    \draw[semithick, ->, >={[round,sep]Stealth}] (3-2) -- node[label] {$5$} (2-4);
  \end{tikzpicture} &
    \textbf{(b)} We drop the \textcolor{bg}{red} arc since an equivalent but cheaper \textcolor{fg}{path} exists. \\[8ex]
    \begin{tikzpicture}[
      label/.style = { midway, circle, fill=white, inner sep=0pt,
        font=\scriptsize\color{orange}}
    ]

    \foreach \a in {1,2,3,4}{
      \foreach \t in {1,2,...,9}{
        \node[dot] (\a-\t) at (\t,\a) {};
      }      
    }
    
    \foreach \a in {1,2,3,4}{
      \foreach \t in {1,2,...,8}{
        \pgfmathtruncatemacro{\nextt}{\t+1}
        \draw[semithick, ->, >={[round,sep]Stealth}, color=gray] (\a-\t) -- (\a-\nextt);
      }
    }
    
    \draw[thick, color=bg, ->, >={[round,sep]Stealth}] (3-3) to[bend left] node[label] {$5$} (1-5);
    \draw[thick, color=fg, ->, >={[round,sep]Stealth}] (4-1) -- node[label] {$2$} (3-2);
    \draw[thick, color=fg, ->, >={[round,sep]Stealth}] (3-2) -- node[label] {$0$} (3-3);
    \draw[thick, color=violet, ->, >={[round,sep]Stealth}] (1-5) to[bend left] node[label] {$3$} (3-7);
    \draw[thick, color=violet, ->, >={[round,sep]Stealth}] (3-7) to node[label] {$1$} (2-8);
    \draw[thick, color=violet, ->, >={[round,sep]Stealth}] (2-8) to  node[label] {$2$} (4-9);

    \draw[semithick, ->, >={[round,sep]Stealth}] (4-2) -- node[label] {$3$} (3-3);
    \draw[semithick, ->, >={[round,sep]Stealth}] (4-4) -- node[label] {$2$} (3-6);
    \draw[semithick, ->, >={[round,sep]Stealth}] (4-3) -- node[label] {$2$} (2-4);
    \draw[semithick, ->, >={[round,sep]Stealth}] (1-1) -- node[label] {$3$} (3-2);
     \draw[semithick, ->, >={[round,sep]Stealth}] (1-2) -- node[label] {$4$} (3-3);
     \draw[semithick, ->, >={[round,sep]Stealth}] (1-4) -- node[label] {$5$} (2-5);
     \draw[semithick, ->, >={[round,sep]Stealth}] (1-6) -- node[label] {$6$} (2-7);
     \draw[semithick, ->, >={[round,sep]Stealth}] (2-5) -- node[label] {$7$} (4-6);
     \draw[semithick, ->, >={[round,sep]Stealth}] (2-5) -- node[label] {$8$} (1-6);
     \draw[semithick, ->, >={[round,sep]Stealth}] (3-7) -- node[label] {$10$} (4-9);
     \draw[semithick, ->, >={[round,sep]Stealth}] (2-7) -- node[label] {$5$} (1-8);
     \draw[semithick, ->, >={[round,sep]Stealth}] (1-8) -- node[label] {$7$} (2-9);
     \draw[semithick, ->, >={[round,sep]Stealth}] (1-2) -- node[label] {$1$} (2-4); 

     \node[color=bg] at (8.8,4.4) {$\alpha_s$};

  \end{tikzpicture} &
    \textbf{(c)} Consider the \textcolor{bg}{red} arc. The \textcolor{fg}{shortest path to its
      tail} has cost~2 + 0, and the \textcolor{violet}{shortest path from its head} to
    the depot has cost~$3+1+2$. Hence, we can discard the red arc if $ub < 2+5+6=13$. 
  \end{tabular}
  \caption{Illustration of \textbf{(a)} the Vee Rule, \textbf{(b)} the
    Shortcut Rule, and \textbf{(c)} the Far Away Rule. }
  \label{figure:simplification rules}
\end{figure}

\subsection{A Constructive Heuristic to Find an Initial Solution}

In order to quickly generate an initial solution, we
use an adaptation of the \emph{insertion
heuristics} for the asymmetric traveling salesman problem with time
windows~\cite{AscheuerFG01}. 
For our setting, we introduce the \emph{permutation-schedule}
representation of trajectories. In
this representation, we encode a feasible trajectory
\begin{align*}
(\alpha_s,t_0)=(\alpha_{i_0},t_{i_0})
\rightarrow
(\alpha_{i_1},t_{i_1})
\rightarrow
\dots
\rightarrow
(\alpha_{i_{n-1}},t_{i_{n-1}})
\rightarrow
(\alpha_{i_{n}},t_{i_{n}})=(\alpha_s,t_{\max})
\end{align*}
in the form of two vectors
$\pi=(\alpha_{i_0},\alpha_{i_1},\dots,\alpha_{i_{n-1}},\alpha_{i_{n}})$
and $\sigma=(t_{i_0},t_{i_1},\dots,t_{i_{n-1}},t_{i_{n}})$, whereby we call $\pi$ the
\emph{permutation} and $\sigma$ the \emph{schedule}. Informally, $\pi$
describes in which order the trajectory visits the bodies, and the
schedule describes when each body is visited.

The idea of our \Algo{init} heuristic is to start with the
partial solution $\pi=(\alpha_s,\alpha_s)$ and
$\sigma=(t_0,t_{\max})$. Then, while there is some unvisited
$\alpha\in\Alpha$, we pick any such body and \emph{insert it} into
$\pi$ at some time $t$ in $\sigma$ that is locally optimal. The
details of this simple strategy are fleshed out in Appendix~\ref{appendix:init}.

\subsection{An Improving Heuristic to Enhance Intermediate Solutions}
\label{section:improving-heuristic}

Our improving heuristic, duped \Algo{swan} (for \emph{\underline{sw}ap
\underline{a}nd \underline{n}udge}), is an adaption of the famous 2-opt
heuristic~\cite{flood1956traveling,croes1958method} for the traveling
salesperson problem. Given an
initial solution in permutation-schedule representation, the \Algo{swan}
heuristic alternately fixes either $\pi$ or $\sigma$ and performs a
local search on the other:
\begin{description}
  \item[Swap Heuristic:] Considers $\sigma$ to be fixed and searches
    for two indices $i_{j}$ and $i_{k}$ such that swapping the two in
    $\pi$ results in a trajectory of lower cost.
  \item[Nudge Heuristic:] Considers $\pi$ to be fixed and searches
    for an index $i_{j}$ such that replacing $i_{j}$ in $\sigma$ by
    either $i_{j}-1$ or $i_{j}+1$ results in a better solution.
\end{description}
Conceptually, the swap heuristic is the 2-opt heuristic for a fixed
schedule, while the nudge heuristic tries to optimize locally the
epochs. We alternately perform both
heuristics exhaustively, i.e., we use the swap heuristic until there
is no possible improvement, then we use the shift heuristic until if
finds no further improvement; and we repeat this process until non of
the two finds any improvement at all.

A drawback of swap-based
heuristics like \Algo{swan} is that a run of the heuristic is not
unique in the following sense: Given $(\pi,\sigma)$, there can be two
or more pairs in $\pi$ (or points in $\sigma$) whose switch (or nudge)
improves the solution; these operations can exclude each other (for
instance, if they share a common element). We tackle this issue via a
metaheuristic, duped \Algo{b-swan}, that implements a beam search on
top of the search space of swaps and nudges.  The implementation
details are described in Appendix~\ref{appendix:improve}.

\subsection{Pre-Computing Subtour-Elimination Constraints}

The final improvement we utilizes is a standard trick to reduce the
number of iterations in dynamic algorithm like DDD: We pre-compute
some of the constraints that would otherwise be discovered by the
algorithm over time. A common choice in the realm of \Lang{tsp} is to
pre-compute the subtour-elimination constraints for all cycles of size
at most three~\cite{PferschyS17}. We will apply the same choice
in order to reduce the number of iterations of the inner-loop of DDD
(see Figure~\ref{figure:ddd}).

%
%
\section{A Benchmarkset for the Keplerian TSP}
\label{sec:benchmarkset}

We constructed and openly released a benchmark set of diverse
\Lang{ktsp} instances~\cite{dataset}, whereby each instance is
specified by a time window~$[t_0, t_f]$, the central body
parameter~$\mu$, and a list of orbiting bodies defined by their
Cartesian state at epoch $t_0$. To ensure accessibility and long-term
usability, we adopt a format inspired by the DIMACS standard (Center
for Discrete Mathematics and Theoretical Computer Science) and
distribute the data through the CERN-hosted Zenodo
platform~\cite{zenodo}
(\url{https://zenodo.org/records/14850862}). For every instance we
also provide several time-expanded formulations, which we provide in a
DIMACS-like format that includes all pre-computed transfer costs.
Although the exact procedure used to select the bodies in $\Alpha$ for
each proposed \Lang{ktsp} instance does not need to be explained in
detail here, it is useful to outline the main guiding principles. We
considered two representative orbital environments as sources: the
asteroid belt and the Jovian system.  Asteroid-belt instances include
selections of $|\Alpha| \in \{5, 10, 20, 40, 80\}$ bodies identified
as being well phased at some epoch $t \in [t_0, t_{\max}]$. This
ensures the presence of transfer opportunities with relatively low
cost, leading to solutions that resemble feasible interplanetary
trajectories in realistic preliminary mission design.  Jovian-moon
instances, by contrast, are constructed with $|\Alpha| \in \{4, 10,
20\}$ satellites chosen to have comparable semi-major axes, thereby
creating problem structures aligned with the dynamics of clustered
orbital systems. For all asteroid-belt instances, the width of the
time window $t_{\max} - t_0$ was fixed at 5.48~years, allowing, on
average, one and a half orbital revolutions to complete the tour. For
the Jovian-moon instances, a width of 6.57~years was adopted, enabling
the outer moons to complete multiple revolutions. The only exception
is the smallest instance, which includes solely the Galilean moons; in
this case, the time horizon is reduced significantly to 0.16~years,
since the outermost moon, Callisto, completes a single orbit in
16.7~days.

We utilize the following property of time-expanded networks to ensure
that optimal solutions with increasing $|\tgrid|$ form a decreasing
sequence.  Accordingly, we provide instances with 6,
11, 21, 41, 81, and 161 time points, enforcing the strict requirement $|\tgrid| >
|\Alpha|$. 

\begin{lemma}[Node Doubling]
\label{lemma:nodedoubling}
Any time-expanded instance of a \Lang{ktsp} defined over $|\Alpha|$ and $|\tgrid|$ has a solution that is strictly dominated by the corresponding instance with $|\tgrid'| = 2(|\tgrid|-1)+1$.
\end{lemma}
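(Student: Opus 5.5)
The plan is to show that the grid with $|\tgrid'| = 2(|\tgrid|-1)+1$ points contains the original grid $\tgrid$ as a subset. Then every feasible trajectory in $G$ can be embedded into $G'$ without changing its cost, so $\val(G')\le\val(G)$; in this sense the coarse solution is dominated by the refined one. We read "dominated" as \emph{weakly} dominated, i.e.\ the refined optimum is never worse. Strict improvement cannot be guaranteed in general, for example when all midpoint transfers are more expensive, so we would state and prove the weak form and note this.

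\textbf{Step 1 (grid nesting).} With spacing $dt=(t_{\max}-t_0)/(|\tgrid|-1)$, the new spacing is
\[
dt'=(t_{\max}-t_0)/(|\tgrid'|-1)=(t_{\max}-t_0)/(2(|\tgrid|-1))=dt/2 .
\]
Every old point $t_0+k\,dt$ therefore equals $t_0+(2k)\,dt'$, which gives $\tgrid\subseteq\tgrid'$. Both grids contain $t_0$ and $t_{\max}$.

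\textbf{Step 2 (arc embedding).} For a transfer arc $((\alpha,t),(\beta,t'))\in E_T(G)$, both endpoints lie in $\tgrid'$. Feasibility of a transfer depends only on $(\alpha,t,\beta,t')$ via the oracle, so the same arc lies in $E_T(G')$. Its weight is also unchanged, because the weight is a function of the endpoints alone, as assumed in the footnote to Problem~\ref{problem:ktsp}. A coasting arc $((\alpha,t),(\alpha,t+dt))$ of $G$ is replaced by the two-arc path through $(\alpha,t+dt')$. That path consists of coasting arcs of weight $0$, so the cost is preserved.

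\textbf{Step 3 (feasibility is preserved).} Apply this substitution to a feasible trajectory $P$ in $G$. The result is a path $P'$ in $G'$ from $(\alpha_s,t_0)$ to $(\alpha_s,t_{\max})$ that visits the same bodies, so $P'$ is feasible and $\val(P')=\val(P)$. Taking $P$ optimal gives $\val(G')\le\val(G)$. Iterating the doubling yields the decreasing sequence used for the grids with $6,11,21,41,81,161$ points.

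The only delicate point is the meaning of "strictly dominated". To obtain strictness one would need an additional assumption, for instance that some optimal transfer in $G$ can be improved by shifting one endpoint by $dt'$. We would flag this and not attempt to prove it unconditionally.
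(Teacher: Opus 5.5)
Your proposal is correct and follows essentially the same route as the paper: halving the spacing, $dt'=dt/2$, gives $\tgrid\subseteq\tgrid'$, so every feasible trajectory of the coarse network (in particular the optimal one) remains feasible at the same cost in the refined network, and the optimum can only decrease. Your caveat about strictness is also apt, because the paper's own argument likewise establishes only the weak inequality $\val(G')\le\val(G)$, despite the word ``strictly'' in the statement.
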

\begin{proof}
By construction, the grid spacing in the refined instance satisfies
$dt' = \tfrac{1}{2} dt$. Hence, every feasible solution in the original
instance~--~including the optimal one~--~remains feasible in the
refined instance. Consequently, the optimal cost of the refined
instance can only improve compared to the original.
\end{proof}

\subsection{Solutions of Selected Instances}
\label{section:besttours}
In this section, we briefly discuss the best trajectories found for
each of the instances with a discretization of 161 time points (which
is the highest we considered for the benchmark set).
Figure~\ref{fig:benchmarkset_plots} displays these trajectories along
with the average $\Delta V$ cost for each arc. For instances where the
global optimum (in the time-expanded network) was not reached, the
best $\Delta V$ obtained is indicated with an asterisk for clarity.

\begin{figure}[htb] 
  \includegraphics[width=\textwidth]{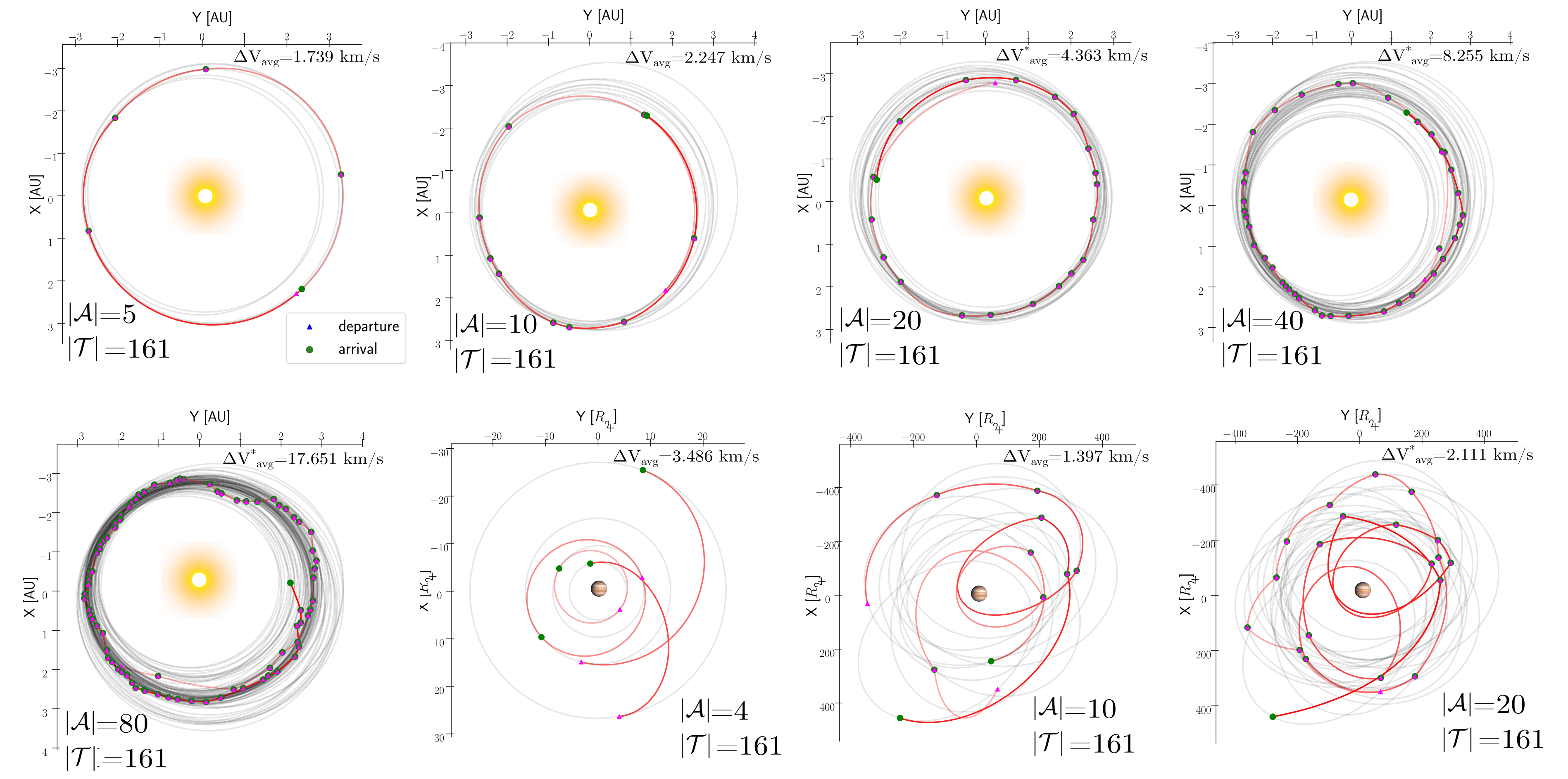} 
  \caption{Sample solutions from the benchmark set for both the asteroid belt and Jovian moons cases. Blue triangles mark departures, green dots arrivals, and red arcs represent Lambert transfers between orbits (grey). Reported values show the average $\Delta V$, with asterisks denoting upper bounds and not verified exact global minima.}
  \label{fig:benchmarkset_plots} 
\end{figure}

The two problem families exemplify contrasting dynamical regimes. In
the asteroid belt, bodies are well phased with similar semi-major
axes, so optimal tours consist of short arcs approximating a
continuous orbit. The tight 5.67~year time window used to create the
various instances prevents coasting to be useful, and as the number of
bodies rises to twenty, the average $\Delta V$ per transfer increases
sharply. 

The Jovian moons exhibit significant variations in
inclination and eccentricity, resulting in a
markedly different dynamical regime. The wide range of semi-major
axes produces orbital periods spanning from months to years, so the
time-expanded network captures the moons' positions at varying
coarseness, often leading to transfers presenting visible
discontinuities in velocities. As a consequence, the resulting trajectories
are costly and far from what a real mission could fly.  The
only exception is the instance containing only the Galilean moons,
where the time window and temporal resolution allow the optimal
solution in the time-expanded network to
consist of transfers very close to a Hohmann solution.

%
%
\section{Experimental Evaluation}
\label{sec:experiments}

We evaluate the techniques developed in this
article using the benchmark set introduced previously. We begin by
analyzing the effectiveness of the reduction rules from
Section~\ref{section:kernelization}, focusing on the percentage of
arcs they eliminate from the time-expanded network. Next, in
Section~\ref{section:plain-cop}, we assess the performance of the
\Lang{ilp} formulation from Section~\ref{section:csp}, comparing
various \Lang{ilp} solvers as backends. Section~\ref{section:ddd-experiment} presents an evaluation
of the dynamic discretization discovery method introduced in
Section~\ref{section:ddd}. Finally,  we compare the \Algo{b-swan}
heuristic with beam search and examine its impact when integrated into
the pipeline.

All experiments presented in this section were run on a server
equipped with two \verb|AMD EPYC 7702| 64-core processors operating at
a maximum clock speed of \verb|3353.5149 MHz|. The server has a total
of \verb|504 GB| of RAM and runs \verb|Arch Linux| with kernel version
\verb|6.13.5-arch1-1|.

\subsection{Performance of the Reduction Rules}

We run the reduction rules from Section~\ref{section:kernelization} on
all the time-expanded
networks. Figure~\ref{result:reduction-rule-performance} shows for
each rule the average amount of arcs it removes from the network if
applied without the other rules, as well as the performance of
applying all rules simultaneously.

\begin{SCfigure}[2.0][htbp]
  \scalebox{0.8}{\begin{tikzpicture}[
      yscale = 0.45,
      dot/.style = {
        color = fg,
        circle,
        draw, fill,
        inner sep = 0pt,
        minimum width = 5pt
      },
      helper/.style = {
        color = gray,
        densely dashed
      }
    ]

    \draw[helper] (0,  0.6) -- (6,  0.6) node[right, baseline] {\small $6.07\%$};
    \draw[helper] (0, 1.32) -- (6, 1.32) node[right, baseline] {\small $13.2\%$};
    \draw[helper] (0, 6.02) -- (6, 6.02) node[right, baseline] {\small $60.2\%$};
    \draw[helper] (0, 3.21) -- (6, 3.21) node[right, baseline] {\small $32.1\%$};
    \draw[helper] (0, 7.10) -- (6, 7.10) node[right, baseline] {\small $71.0\%$};

    \foreach \x/\rule in {1/Rule 1, 2/Rule 2, 3/Rule 3, 3/Rule 3, 4/Rule 4, 5/All Rules} {
      \draw (\x, -1) -- ++(0, -.75) node[below, rotate = 45] {\small\rule};
    }
    \foreach \y in {0,2,4,...,8} {
      \draw (0, \y) -- (-0.25, \y) node[left, baseline] {\small$\y0\%$};
    }
    
    \draw[semithick, ->, >={[round]Stealth}] (0, -1) -- (0, 9);
    \draw[semithick, ->, >={[round]Stealth}] (0, -1) -- (6, -1);

    \draw[semithick, color = fg, |-|] (1, -0.15) -- (1, 1.36);
    \node[dot] at (1, 0.6) {};
    
    \draw[semithick, color = fg, |-|] (2, 0.32) -- (2, 2.32);
    \node[dot] at (2, 1.32) {};

    \draw[semithick, color = fg, |-|] (3, 4.63) -- (3, 7.41);
    \node[dot] at (3, 6.02) {};

    \draw[semithick, color = fg, |-|] (4, 1.04) -- (4, 5.38);
    \node[dot] at (4, 3.21) {};

    \draw[semithick, color = fg, |-|] (5, 5.89) -- (5, 8.31);
    \node[dot] at (5, 7.10) {};    
  \end{tikzpicture}}
  \caption{Average number of arcs removed from time-expanded networks
    by each reduction rule when applied individually, and by all rules
    applied together. Error bars represent the standard deviation
    across the benchmark set.}
  \label{result:reduction-rule-performance}
\end{SCfigure}

\subsection{Evaluation of Various ILP Solvers}
\label{section:plain-cop}

We evaluate the performance of different \Lang{ilp} solvers used
as backends for the formulation introduced in
Section~\ref{section:csp}. As shown in
Table~\ref{table:cop-comparison}, the commercial solver Gurobi
consistently outperforms the other tools on this dataset when paired
with the proposed encoding. Due to its superior performance, we
restrict the backend to Gurobi in all subsequent experiments.

\begin{SCtable}[1.0][htbp]
  \caption{Experimental results on a subset of the \Lang{ktsp}
    instances from the asteroid belt using different \Lang{ilp} solvers as backend for the
    \Lang{ilp} formulation. We report the computational time (in
    seconds) and the value of the optimal solution (OPT). The best time on each
    instance is \textbf{bold}.}
  \label{table:cop-comparison}
  \scriptsize\centering
  \linespread{1}\selectfont
\begin{tabular}{cc|ccccr} 
  \toprule
  %
  \multicolumn{2}{c|}{\emph{Instance (belt)}} &
  Gurobi & 
  HiGHS  & 
  Scip   & 
  CBC    &
  OPT\\
  %
  $|\Alpha|$ & $|\tgrid|$ &
  \hbox to 0pt{\emph{Solution time in seconds.}\hss}& 
  & 
  & 
  & 
  \\[0.25ex]
  \cmidrule(rl){1-7}
  %
  5 & 6 & 
  0.083 & 
  0.088 & 
  0.061 & 
  \textbf{0.055} & 
  12318.12 
  \\
  %
  %
  %
  5 & 11 & 
  0.113 & 
  0.239 & 
  0.250 & 
  \textbf{0.096} & 
  10938.92
  \\
  %
  %
  5 & 21 & 
  \textbf{0.181} & 
  1.369 & 
  0.742 & 
  0.233 & 
  10533.94
  \\
  %
  %
  5 & 41 & 
  \textbf{0.378} & 
  3.077 & 
  3.519 & 
  1.840 & 
  10459.16
  \\
  \cmidrule(rl){1-7}
  %
  %
  10 & 11 & 
  0.269 & 
  2.177 & 
  2.239 & 
  \textbf{0.242} & 
  38734.94
  \\
  %
  %
  10 & 21 & 
  \textbf{0.646} & 
  4.891 & 
  11.15 & 
  3.120 & 
  28173.62
  \\
  %
  %
  10 & 41 & 
  \textbf{2.136} & 
  22.49 & 
  85.25 & 
  26.24 & 
  25290.47
  \\
  \cmidrule(rl){1-7}
  %
  %
  20 & 21 & 
  \textbf{6.767} & 
  45.64 & 
  361.1 & 
  60.22 & 
  118104.77
  \\
    %
  %
  20 & 41 & 
  \textbf{354.6} & 
  37644.09 & 
  92137.19 & 
  20373.32 & 
  95033.47
  \\
  \bottomrule
\end{tabular}

\end{SCtable}

\begin{table}[htbp]
  \center%
  \caption{\linespread{1}\selectfont The impact of Dynamic Discretization Discovery on the size of
    the \Lang{ilp} formulation for some instances. The ``Plain'' columns refer to
    the encoding as presented in Section~\ref{section:csp}, the four
    columns under ``DDD'' present the \emph{largest} encountered
    instance during a run of the algorithm presented in
    Section~\ref{section:ddd} on this instance. The last two entries
    highlight the reduction of the encoding in percent and the number
    of \Lang{ilp} problems that needed to be solved. The last four
    columns use DDD with \emph{pre-computed cycle
    constraints} for cycles of size at most three. This modification
    increases the number of constraints, but reduces the number of
    \Lang{ilp} instances that must be solved.}
  \label{table:ddd-reduction}
  \resizebox{\textwidth}{!}{\linespread{1}\selectfont
\begin{tabular}{cc|cc|cccc|cccc}
  \toprule
  \multicolumn{2}{c|}{\emph{Instance (belt)}} & 
  Plain           &  &
  DDD             & & & &
  DDD-pc          & & & 
  \\
  $|\Alpha|$ & $|\tgrid|$ &
  \# Variables & \# Constraints &
  \# Variables & \# Constraints & Reduction & Calls &
  \# Variables & \# Constraints & Reduction & Calls    
  \\
  \cmidrule(rl){1-12}
  %
  %
  5   & 6   &
  277 & 34  &
  277 & 34  & 0\%    & 1 &
  277 & 34  & 0\% & 1 
  \\
  %
  %
  5   & 11  &
  984 & 59  &
  498 & 39  & \textbf{48.5\%} & 6 &
  498 & 39  & \textbf{48.5\%} & 6
  \\
  %
  %
  5    & 21   &
  3529 & 109  &
  2014 & 76   & \textbf{42.5\%} & 13 &
  2014 & 330  & \textbf{35.5\%} & 11
  \\
  \cmidrule(rl){1-12}
  %
  %
  10   & 11   &
  4099 & 119  &
  4099 & 119  & 0\% & 1 &
  4099 & 119  & 0\% & 1
  \\
  %
  %
  10     & 21 &
  15483  & 219 &
  8075   & 170 & \textbf{47.4\%} & 163 &
  8075   & 230 & \textbf{47.1\%} & 38
  \\
  %
  %
  10    & 41  &
  59212 & 419 &
  20438 & 247 & \textbf{65.3\%} & 242 &
  20438 & 921 & \textbf{64.1\%} & 13
  \\
  %
  %
  %
  %
  %
  %
  \bottomrule
\end{tabular}
}
\end{table}

\clearpage
\subsection{Impact of Dynamic Discretization Discover}
\label{section:ddd-experiment}

To evaluate the effectiveness of Dynamic Discretization Discovery
(DDD), we compare the size of the resulting \Lang{ilp} formulations
against the plain encoding introduced in
Section~\ref{section:csp}. While the plain encoding uses a fixed
discretization strategy, DDD adapts the discretization dynamically
during the solving process, potentially reducing the problem size
significantly. Table~\ref{table:ddd-reduction} summarizes the impact of this approach
across several benchmark instances. It highlights not only the
reduction in encoding size but also the trade-off between constraint
complexity and the number of \Lang{ilp} problems that need to be solved. 

While the reduction in encoding size is very promising, our current
implementation of DDD tends to converge to instances that appear
significantly more challenging for the
solver. Figure~\ref{figure:ddd-vs-direct} compares a run of the plain
encoding with a run of the DDD strategy. It can be observed that,
despite the smaller size of the final instances produced by DDD, they
are actually harder to solve than the original encoding. We therefore
conclude that, in its current form, DDD is not superior to the direct
approach. Nonetheless, the strategy remains promising~--~not only due to
the substantial reduction in encoding size, but also because related
work has shown that alternative backends such as \Lang{max-sat} can
outperform commercial solvers like Gurobi in similar
settings~\cite{CroellaLMV24}. This advantage stems from the fact that
\Lang{sat}-based approaches tend to handle scenarios
involving a large number of solver invocations slightly better. 

\begin{figure}[htbp]
  \centering
  \includegraphics[width=0.45\textwidth]{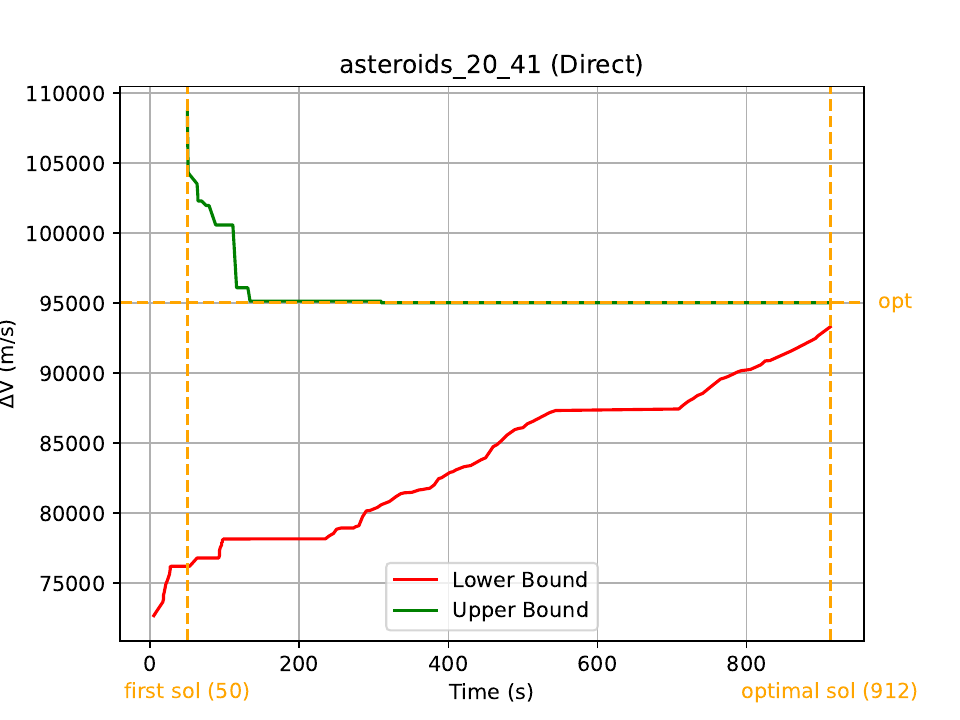}
  \quad
  \includegraphics[width=0.45\textwidth]{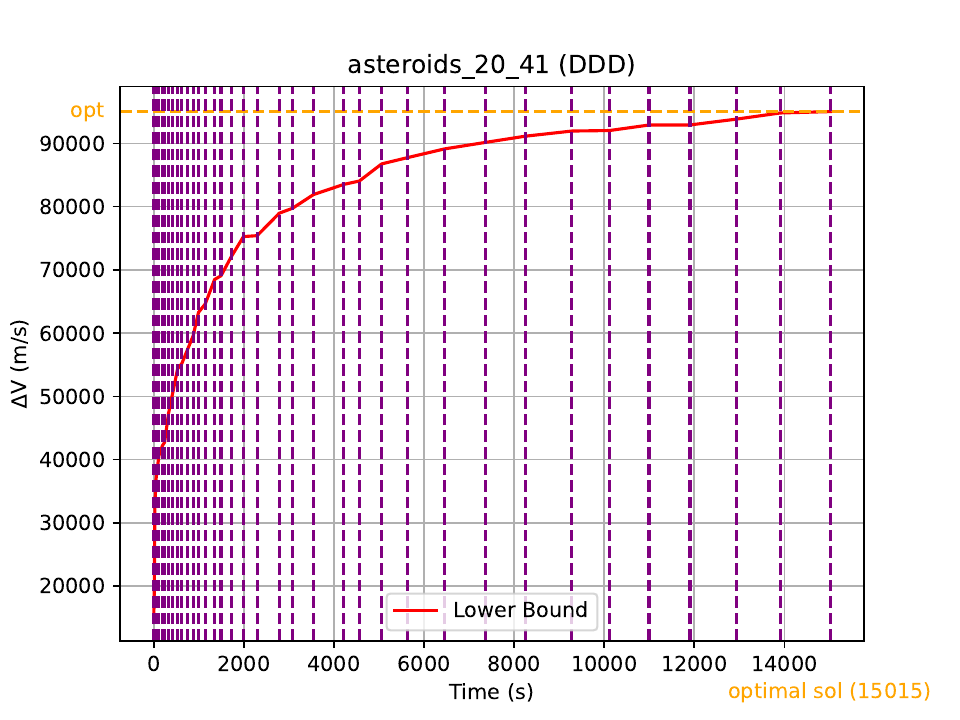}
  \caption{Comparison of the direct encoding of
    Section~\ref{section:csp} with the dynamic discretization encoding
  from Section~\ref{section:ddd} on the instance from the asteroid
  belt with $20$ asteroids and $81$ time points. Left the performance of the direct
  encoding that shows how the lower and upper bounds evolve over
  time. Right shows the DDD approach, whereby every purple
  lines indicates the start of a new round. This approach does not
  produce an upper bound, since it iteratively improves the lower bound.}
  \label{figure:ddd-vs-direct}
\end{figure}

\subsection{Assessment of the B-Swan Heuristic}
\label{section:swan-experiments}

In this section, we evaluate the performance of the \Algo{b-swan}
heuristic introduced in Section~\ref{section:improving-heuristic}. To
contextualize its effectiveness, we compare it to a classical beam search
with beam width of 1000 and 10000 (and without any heuristics improvements). The
evaluation is conducted across all instances from the benchmark set
described in
Section~\ref{sec:benchmarkset}. Table~\ref{table:heuristics}
presents the resulting optimality gaps, computed with respect to the
best lower or upper bounds we know for the these instances.

To investigate the integration of preprocessing and heuristic
techniques into the \Lang{ilp} pipeline, we now evaluate the impact of
combining the reduction rules and the \Algo{b-swan} heuristic with
Gurobi~--~identified as the most effective backend in
Table~\ref{table:cop-comparison}. Table~\ref{table:gurobi} in the
appendix presents
the results of running Gurobi with the encoding from
Section~\ref{section:csp} across all benchmark instances described in
Section~\ref{sec:benchmarkset}. We report the lower bounds (LB), upper
bounds (UB), and the resulting optimality gaps. 

One takeaway from the table is that preprocessing always
has a positive impact. Providing an initial
solution to Gurobi or assisting it by improving intermediate solutions
can slightly reduce performance on easier instances. This is expected,
as these strategies introduce overhead in cases where there is little
room for improvement. However, their benefits become evident on more
challenging instances~--~for example, the asteroid belt scenario with 40
bodies and 81 time points. In this case, the plain Gurobi approach
fails to find any feasible solution within the 4-hour time limit,
whereas enabling preprocessing allows a valid bound to be found.  

\begin{table}[ht]
  \caption{Comparison of the heuristic developed in
    this article against a beam search with
    beamwidth 10000 and 1000 on all instances from the benchmarkset. The part of the table
    before the double line contains the instance of the asteroid belt,
    while below are the once from the Jovian system. The gaps are computed with
    respect to the best lower bound or upper bound found in
    Table~\ref{table:gurobi} and highlighted, if they are best within
    the present table.}
  \label{table:heuristics}
  \resizebox{\textwidth}{!}{\begin{tabular}{@{}ll|lllr|lllr|lllr@{}}
\toprule
   &     & Beam-10.000                       &                &                & \multicolumn{1}{l|}{}         & Beam-1000                         &                &                & \multicolumn{1}{l|}{}         & B-Swan              &                &                & \multicolumn{1}{l}{}         \\
$|\Alpha|$  & $|\tgrid|$   & UB                                & Gap (VLB)      & Gap (VUB)      & \multicolumn{1}{l|}{Time (s)} & UB                                & Gap (VLB)      & Gap (VUB)      & \multicolumn{1}{l|}{Time (s)} & UB                & Gap (VLB)      & Gap (VUB)      & \multicolumn{1}{l}{Time (s)} \\ \midrule
\multicolumn{14}{c}{Asteroid-belt instances} \\  \midrule
5  & 6   & \textbf{12318.12}                 & \textbf{0.00}  & \textbf{0.00}  & 172.481µs                     & \textbf{12318.12}                 & \textbf{0.00}  & \textbf{0.00}  & 177.381µs                     & \textbf{12318.12} & \textbf{0.00}  & \textbf{0.00}  & 83.291µs                     \\
5  & 11  & \textbf{10938.91}                 & \textbf{0.00}  & \textbf{0.00}  & 285.951µs                     & \textbf{10938.91}                 & \textbf{0.00}  & \textbf{0.00}  & 275.952µs                     & \textbf{10938.91} & \textbf{0.00}  & \textbf{0.00}  & 0.003                        \\
5  & 21  & \textbf{10533.94}                 & \textbf{0.00}  & \textbf{0.00}  & 0.003                         & \textbf{10533.94}                 & \textbf{0.00}  & \textbf{0.00}  & 0.002                         & \textbf{10533.94} & \textbf{0.00}  & \textbf{0.00}  & 0.014                        \\
5  & 41  & \textbf{10459.16}                 & \textbf{0.00}  & \textbf{0.00}  & 0.016                         & \textbf{10459.16}                 & \textbf{0.00}  & \textbf{0.00}  & 0.005                         & 10503.28          & 0.42           & 0.42           & 0.016                        \\
5  & 81  & 10506.49                          & 0.71           & 0.71           & 0.1                           & 12068.34                          & 13.56          & 13.56          & 0.016                         & \textbf{10431.62} & \textbf{0.00}  & \textbf{0.00}  & 0.03                         \\
5  & 161 & 12987.41                          & 19.68          & 19.68          & 0.3                           & 38034.71                          & 72.57          & 72.57          & 0.026                         & 10834.68          & \textbf{3.72}  & \textbf{3.72}  & 0.058                        \\ \midrule
10 & 11  & 41437.79                          & 6.52           & 6.52           & 231.231µs                     & 41437.79                          & 6.52           & 6.52           & 201.421µs                     & \textbf{38734.93} & \textbf{0.00}  & \textbf{0.00}  & 0.017                        \\
10 & 21  & 29847.05                          & 5.61           & 5.61           & 0.038                         & 29847.05                          & 5.61           & 5.61           & 0.005                         & 28254.54          & \textbf{0.29}  & \textbf{0.29}  & 0.006                        \\
10 & 41  & 35383.59                          & 28.52          & 28.52          & 0.1                           & 39274.27                          & 35.61          & 35.61          & 0.012                         & \textbf{25290.47} & \textbf{0.00}  & \textbf{0.00}  & 6.2                          \\
10 & 81  & 215231.76                         & 88.45          & 88.45          & 0.2                           & 272736.66                         & 90.89          & 90.89          & 0.027                         & 24965.96          & \textbf{0.46}  & \textbf{0.46}  & 0.18                         \\
10 & 161 & 263105.81                         & 90.61          & 90.61          & 0.3                           & 285558.47                         & 91.34          & 91.34          & 0.056                         & 24765.34          & \textbf{0.19}  & \textbf{0.19}  & 8.5                          \\ \midrule
20 & 21  & 200991.38                         & 41.24          & 41.24          & 198.651µs                     & 200991.38                         & 41.24          & 41.24          & 192.471µs                     & 121898.06         & \textbf{3.11}  & \textbf{3.11}  & 0.039                        \\
20 & 41  & 229067.11                         & 58.51          & 58.51          & 0.2                           & 229067.11                         & 58.51          & 58.51          & 0.028                         & 98109.02          & \textbf{3.13}  & \textbf{3.13}  & 7.2                          \\
20 & 81  & 445254.35                         & 81.82          & 79.22          & 0.6                           & 545685.15                         & 85.16          & 83.04          & 0.077                         & 95404.03          & \textbf{15.14} & \textbf{3.00}  & 4.6                          \\
20 & 161 & 1195351.29                        & 94.09          & 92.33          & 1.1                           & 979616.50                         & 92.79          & 90.65          & 0.09                          & 100353.12         & \textbf{29.62} & \textbf{8.69}  & 1.3                          \\ \midrule
40 & 41  & 999661.61                         & 60.88          & 60.88          & 458.573µs                     & 999661.61                         & 60.88          & 60.88          & 474.553µs                     & 450838.70         & \textbf{13.26} & \textbf{13.26} & 124                          \\
40 & 81  & {\color[HTML]{0C0D0E} 1943476.34} & 86.66          & 83.69          & 0.90                          & {\color[HTML]{0C0D0E} 1652302.56} & 84.31          & 80.82          & 0.10                          & 368538.01         & \textbf{29.67} & \textbf{14.01} & 15.80                        \\
40 & 161 & 3454397.64                        & 93.49          & 90.95          & 2.2                           & 3590681.86                        & 93.74          & 91.30          & 0.2                           & 366870.32         & \textbf{38.72} & \textbf{14.82} & 533                          \\ \midrule
80 & 81  & 2442965.08                        & 49.40          & 36.25          & 1.2                           & 2442965.08                        & 49.40          & 36.25          & 0.001                         & 1888709.97        & \textbf{34.55} & \textbf{17.55} & 707                          \\
80 & 161 & 5268435.39                        & 85.53          & 73.03          & 4                             & 5268544.46                        & 85.53          & 73.03          & 0.5                           & 1627976.24        & \textbf{53.17} & \textbf{12.73} & 7.8                          \\ \midrule
\multicolumn{14}{c}{Jovian-moon instances} \\  \midrule
4  & 6   & 42806.79                          & 9.58           & 9.58           & 183.981µs                     & 42806.79                          & 9.58           & 9.58           & 145.421µs                     & \textbf{38705.56} & \textbf{0.00}  & \textbf{0.00}  & 0.003                        \\
4  & 11  & 28112.39                          & 28.51          & 28.51          & 186.921µs                     & 28112.39                          & 28.51          & 28.51          & 194.101µs                     & 24771.83          & \textbf{18.87} & \textbf{18.87} & 0.003                        \\
4  & 21  & 21240.77                          & \textbf{12.35} & \textbf{12.35} & 216.482µs                     & 21240.77                          & \textbf{12.35} & \textbf{12.35} & 220.191µs                     & 24426.14          & 23.78          & 23.78          & 0.003                        \\
4  & 41  & 20260.10                          & \textbf{11.94} & \textbf{11.94} & 223.431µs                     & 20260.10                          & \textbf{11.94} & \textbf{11.94} & 481.952µs                     & 22575.57          & 20.97          & 20.97          & 0.003                        \\
4  & 81  & 18464.03                          & \textbf{4.55}  & \textbf{4.55}  & 0.003                         & 18464.03                          & \textbf{4.55}  & \textbf{4.55}  & 0.001                         & 26498.51          & 33.49          & 33.49          & 0.001                        \\
4  & 161 & 18248.64                          & \textbf{4.48}  & \textbf{4.48}  & 0.012                         & 18248.64                          & \textbf{4.48}  & \textbf{4.48}  & 0.004                         & 23505.26          & 25.84          & 25.84          & 0.037                        \\ \midrule
10 & 11  & 20381.28                          & 2.50           & 2.50           & 238.082µs                     & 20381.28                          & 2.50           & 2.50           & 214.291µs                     & \textbf{19871.59} & \textbf{0.00}  & \textbf{0.00}  & 0.7                          \\
10 & 21  & 17866.39                          & 8.94           & 8.94           & 0.029                         & 17866.39                          & 8.94           & 8.94           & 0.005                         & 17049.86          & \textbf{4.58}  & \textbf{4.58}  & 0.7                          \\
10 & 41  & 17360.86                          & 10.08          & 10.08          & 0.1                           & 18461.53                          & 15.44          & 15.44          & 0.019                         & 16126.06          & \textbf{3.19}  & \textbf{3.19}  & 1.6                          \\
10 & 81  & 22388.45                          & 31.02          & 31.02          & 0.4                           & 34905.86                          & 55.76          & 55.76          & 0.048                         & 15853.17          & \textbf{2.58}  & \textbf{2.58}  & 15.4                         \\
10 & 161 & 62147.67                          & 75.26          & 75.26          & 0.9                           & 83009.90                          & 81.48          & 81.48          & 0.1                           & 16066.03          & \textbf{4.31}  & \textbf{4.31}  & 0.3                          \\ \midrule
20 & 21  & 59865.54                          & 16.01          & 16.01          & 195.571µs                     & 59865.54                          & 16.01          & 16.01          & 180.411µs                     & 53839.95          & \textbf{6.61}  & \textbf{6.61}  & 1.2                          \\
20 & 41  & 65257.25                          & 28.74          & 28.74          & 0.2                           & 76954.06                          & 39.57          & 39.57          & 0.03                          & 48056.73          & \textbf{3.23}  & \textbf{3.23}  & 1.7                          \\
20 & 81  & 118213.20                         & 63.13          & 62.10          & 0.8                           & 128832.04                         & 66.17          & 65.23          & 0.1                           & 49309.55          & \textbf{11.60} & \textbf{9.15}  & 0.4                          \\
20 & 161 & 204549.12                         & 81.41          & 78.32          & 2.5                           & 298419.22                         & 87.26          & 85.14          & 0.3                           & 46681.22          & \textbf{18.54} & \textbf{5.01}  & 57.8                         \\ \bottomrule
\end{tabular}
}
\end{table}

%
%
\section{Conclusion}
\label{section:conclusion}

We introduced and studied the Keplerian traveling salesperson problem
as a problem at the interface of aerospace trajectory design and
discrete optimization, and we established the first openly available
benchmark set to foster progress on this problem. We argued how such a
formal problem is at the heart of all efforts to design multiple
rendezvous interplanetary missions. Our study shows that time-expanded
network formulations, combined with dedicated preprocessing and
heuristic improvements, provide a viable way of tackling the
combinatorial complexity of the task. At the same time, we
demonstrated how Dynamic Discretization Discovery can overcome the
explosion of the problem size inherent in time-indexed formulations,
additionally offering a natural bridge between continuous optimization
techniques familiar in astrodynamics and discrete optimization methods
developed in operations research.

Our computational study highlights both the promise and the current
limitations of relying on integer linear programming solvers for this
problem class. While state-of-the-art \Lang{ilp} technology proves
effective, the strong temporal structure of \Lang{ktsp} suggests that
alternative paradigms, such as \Lang{max-sat} and related
\Lang{sat}-based methods, may offer additional advantages,
particularly in dynamically evolving formulations. Exploring these
directions could unlock further synergies between aerospace
applications and modern combinatorial optimization technologies.

%
%
\section*{Acknowledgment}

Some of the ideas presented in this work were discussed and refined during the
Dagstuhl Seminar 25362, \emph{Optimization and Automated Reasoning for
Designing Future Space Missions,} held from August 31 to September 3,
2025. 

%
%
\clearpage
\section*{Bibliography}
\bibliography{main}

%
%
\clearpage
\section{Technical Appendix}
\label{app:details}

\subsection{Proof of Theorem~\ref{theorem:presolve}}
\label{appendix:presolve}

We prove the theorem in the form of four lemmas, each establishing the
correctness and running time of one of the reduction rules.

\begin{lemma}
  Rule~\ref{rule:heavy-arc} is safe and can be applied exhaustively in
  linear time. 
\end{lemma}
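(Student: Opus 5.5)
The plan is to prove safety and running time separately. For safety, we first fix what ``solution-equivalent'' means. Let $G'$ be the network obtained by deleting one arc $e$ with $w(e)>\textit{ub}$. We show that $\val(G')=\val(G)$, and that every optimal feasible trajectory of $G$ is still present in $G'$ (or at least that one of them is). We assume, as the rule's premise requires, that $\textit{ub}$ is a valid upper bound, i.e.\ $\val(G)\le\textit{ub}$.

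The key step is a one-line exchange argument. Let $P$ be any feasible trajectory that uses $e$. All arc weights are nonnegative: coasting arcs have weight $0$ and transfer arcs have weight $w_G(e)\geq 0$. Hence
\[
\val(P)\;\geq\; w(e)\;>\;\textit{ub}\;\geq\;\val(G).
\]
So no optimal trajectory uses $e$. Deleting $e$ keeps every optimal trajectory intact, which gives $\val(G')\le\val(G)$. Deleting arcs can only remove feasible trajectories, which gives $\val(G')\ge\val(G)$. Therefore $\val(G')=\val(G)$, and the optimal solutions of $G'$ are exactly those of $G$.

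Exhaustive application then follows by induction on the number of deletions. Deleting arcs never increases $\val$, so $\textit{ub}$ remains a valid upper bound after each step. The applicability condition $w(e)>\textit{ub}$ depends only on the arc itself and not on the rest of the network. Thus a deletion never makes the rule applicable to a new arc or inapplicable to an arc that previously qualified. In particular the order of deletions is irrelevant, and exhaustive application deletes exactly $\{e\in E(G)\mid w(e)>\textit{ub}\}$.

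For the running time, a single pass over $E(G)$ suffices. We compare each weight with $\textit{ub}$ and drop the arc if it is larger. This takes $O(|E(G)|)$ time, which is linear in the size of the network.

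The only delicate point I expect is making precise that the rule relies on $\textit{ub}\ge\val(G)$. If no feasible trajectory exists, the statement is vacuous. We also need to address ties: if $\textit{ub}=\val(G)$, then a trajectory attaining $\textit{ub}$ uses no arc of weight exceeding $\textit{ub}$, so the strict inequality in the rule handles this case.
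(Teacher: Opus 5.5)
Your proof is correct and is essentially the paper's argument: an optimal trajectory has value at most $\mathit{ub}$, so by nonnegativity of the weights none of its arcs is heavier than $\mathit{ub}$ and it survives the deletion, and a single $O(|E(G)|)$ scan implements the rule. The paper deletes all heavy arcs at once rather than inducting over single deletions, and there is one slip in your induction step: in general deleting arcs can only \emph{increase} $\val$, so what keeps $\mathit{ub}$ valid is that each heavy-arc deletion preserves $\val$, which you already proved in the preceding paragraph.
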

\begin{proof}
  It suffices to scan over the arcs of $G$ and filter out
  the ones with weight exceeding the bound, which is possible in time
  $O(|E(G)|)$. For correctness let $G$ be the input network and $G'$
  the network obtained by applying the rule. Then $\val(G')=\val(G)$
  since in $G$ there is a trajectory $P$ with
  $\val(P)=\val(G)\leq\mathit{ub}$, which is present in $G'$ since no
  arc in $P$ can have a weight of more than $\mathit{ub}$.
\end{proof}

\begin{lemma}
  Rule~\ref{rule:vee} is safe and can be applied exhaustively in time $O(|\Alpha|\cdot|\tgrid|)$. 
\end{lemma}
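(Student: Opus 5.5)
To prove that Rule~\ref{rule:vee} is safe, I will follow the template of the previous lemma. Let $G'$ be obtained from $G$ by deleting an arc $e=\big((\alpha,t),(\beta,t')\big)$ for which the rule is applicable. We have $\val(G')\geq\val(G)$ trivially, since every feasible trajectory in $G'$ is one in $G$. For the converse, fix an optimal feasible trajectory $P$ in $G$; note that $\val(P)=\val(G)\leq\mathit{ub}$. It suffices to show that $e\notin P$.

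Suppose for contradiction that $e\in P$. Then $P$ arrives at $(\beta,t')$ with $\beta\neq\alpha_s$. Since $P$ must end at $(\alpha_s,t_{\max})$, it has to leave $\beta$ at some point after $t'$. Following $P$ from $(\beta,t')$, it first uses coasting arcs $(\beta,\tau)\to(\beta,\tau+dt)$, which have weight zero. Eventually it takes a transfer arc $\tilde e=\big((\beta,t''),(\gamma,t''')\big)$ with $t''\geq t'$ and $\gamma\neq\beta$. (One should check that transfer arcs leave to a different body; otherwise, taking the first arc changing the body still works.) All arc weights are nonnegative, so
\[
\val(P)\geq w(e)+w(\tilde e)>\mathit{ub},
\]
which is a contradiction. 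Hence $P$ survives in $G'$, and $\val(G')=\val(G)$. Applying this argument repeatedly gives exhaustive safety, because $\mathit{ub}$ stays a valid upper bound after each deletion.

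For the running time, I would not test each arc against all outgoing arcs naively. For every body $\beta$ and every time $\tau\in\tgrid$, define
\[
m(\beta,\tau)=\min\big\{\,w(\tilde e)\;\big|\;\tilde e=\big((\beta,t''),(\gamma,\cdot)\big),\ \gamma\neq\beta,\ t''\geq\tau\,\big\}.
\]
This is a suffix minimum over $\tau$. First compute, for each vertex, the cheapest departing transfer. Then sweep backwards in time for each $\beta$. This gives $|\Alpha|\cdot|\tgrid|$ table entries, each computed in constant time. An arc $e$ into $(\beta,t')$ is then deleted if and only if $w(e)+m(\beta,t')>\mathit{ub}$, which is a constant-time test.

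The main obstacle is reconciling these steps with the claimed bound $O(|\Alpha|\cdot|\tgrid|)$. Computing the per-vertex minima and scanning the arcs for the deletion test both cost $O(|E(G)|)$. So I would either state the bound as $O(|\Alpha|\cdot|\tgrid|)$ plus the linear scan, or assume the per-vertex minima are available from the network construction.

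A further subtlety is exhaustive application. Deleting $e$ only removes arcs entering $\beta$, while the values $m$ depend on arcs leaving $\beta$. Deleting an arc $\tilde e$ could increase $m$ at its tail and create new applications of the rule. I would handle this by recomputing $m$ only at the affected tails and argue that each deletion triggers bounded work. Alternatively, I would note that a single pass suffices up to this cascade and iterate.
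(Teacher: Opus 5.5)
Your argument is the paper's proof. Safety comes from the first arc $\tilde e$ by which the optimal trajectory leaves $\beta\neq\alpha_s$ after using $e$, which gives $\val(G)\ge w(e)+w(\tilde e)>\mathit{ub}$. The running time comes from a table of suffix minima, initialised by one scan over the arcs and completed by a backward sweep over $\tgrid$ for each body. Your remark about arcs from $\beta$ back to $\beta$ is a small improvement, since $E_T$ as defined does not exclude them.

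Both of your caveats are correct, and the paper glosses over them. Its own proof also spends $O(|E(G)|)$ initialising the table. It also builds the table only once, so deletions enabled by earlier deletions are never handled.

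The step I would not accept as written is ``each deletion triggers bounded work''. Deleting a single arc leaving $(\beta,t'')$ can raise $m(\beta,\tau)$ for every $\tau\le t''$. Re-running the one-pass procedure can also take on the order of $|\tgrid|$ rounds, because a cascade can move backwards through the horizon one time step per round.

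The clean fix uses the fact that time strictly increases along arcs:
\begin{itemize}
\item Whether $e=\big((\alpha,t),(\beta,t')\big)$ is deleted depends only on $m(\beta,t')$, that is, on arcs departing $\beta$ at times $\ge t'>t$.
\item Deleting $e$ changes $m(\alpha,\tau)$ only for $\tau\le t$.
\end{itemize}
So bucket the arcs by departure time and sweep $\tau$ from $t_{\max}$ down to $t_0$. At each $\tau$, apply the rule's test to every arc departing at $\tau$; the values $m(\cdot,t')$ with $t'>\tau$ it needs are already final. Then set $m(\cdot,\tau)$ from the surviving arcs departing at $\tau$ towards other bodies, together with $m(\cdot,\tau+dt)$. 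Later deletions only involve departures before $\tau$, so they never change these values. One sweep therefore reaches the exhaustive fixpoint in $O(|E(G)|+|\Alpha|\cdot|\tgrid|)$ time, which is the honest form of the stated bound.
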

\begin{proof}
  For the correctness take a feasible trajectory $P$ with
  $\val(P)=\val(G)$. Assume $P$ uses the arc
  $e=\big((\alpha,t),(\beta,t')\big)$, then it needs to leave
  $\beta$ at some point $t"\geq t'$ via some arc $\tilde e$ since $\beta\neq\alpha_s$. But
  $P$ cannot contain any of these arcs since
  $\val(G)=\val(P)\leq\mathit{ub}<w(e)+w(\tilde e)$.

  For the runtime we manage a hash table $T$ that stores for every
  node $(\alpha,t)\in\Alpha\times\tgrid$ the cost of the cheapest arc
  leaving $\alpha$ at some point $t'\geq t$. We initialize $T$ by
  scanning over all arcs ones (in time $O(|E|)$) and store the
  cheapest arc leaving $(\alpha,t)$ (at time $t$). Finally, for every
  $\alpha\in\Alpha$ we start at $t_{\max}$ and propagate the minimum
  value in $T$ backwards (which requires time $O(|\Alpha|\cdot|\tgrid|)$).
\end{proof}

\begin{lemma}
  Rule~\ref{rule:shortcut} is safe and can be applied exhaustively in time $O(|V(G)|^3)$. 
\end{lemma}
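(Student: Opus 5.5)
The plan has two parts: a replacement argument for safety, and a shortest-path computation for the running time.

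\textbf{Safety.} Let $G$ be the input network and $G'$ the network after deleting an arc $e=\big((\alpha,t),(\beta,t')\big)$ with $\mathrm{dist}\big((\alpha,t),(\beta,t')\big)<w(e)$. Here $\mathrm{dist}$ denotes the shortest-path distance in $G$ with respect to $w_G$.

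Deleting arcs can only remove feasible trajectories, so $\val(G')\geq\val(G)$. For the reverse inequality, take a feasible trajectory $P$ with $\val(P)=\val(G)$ that uses $e$. Since the strict inequality holds, there is a shortest $(\alpha,t)$--$(\beta,t')$ path $Q$ in $G$ with $\val(Q)<w(e)$. Then $Q\neq e$, and in fact $Q$ does not contain $e$ at all: the network is acyclic and $Q$ connects the same endpoints, so including $e$ would force $Q=e$.

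Replacing $e$ by $Q$ in $P$ gives a walk $P'$ from $(\alpha_s,t_0)$ to $(\alpha_s,t_{\max})$. It is a genuine path because $G$ is acyclic and all arcs go forward in time. It still visits $\alpha$ and $\beta$, and every other body visited by $P$, because we only add vertices. Hence $P'$ is feasible and $\val(P')<\val(P)$, which contradicts the optimality of $P$. So no optimal trajectory uses $e$, and $\val(G')=\val(G)$.

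\textbf{Exhaustive application.} The one subtle point is exhaustive application: after deleting some arcs, distances in the current network might increase. I would fix all distances once in the original $G$ and argue as follows.
\begin{itemize}
\item Every arc $e$ with $\mathrm{dist}_G<w(e)$ admits a replacement path consisting only of arcs that are themselves ``tight'', meaning $\mathrm{dist}_G(\text{tail},\text{head})=w$.
\item This holds because any arc on a shortest path that is not tight could itself be replaced by a strictly shorter path. Iterating this replacement terminates: the network is finite and acyclic, and each replacement strictly decreases the cost of a path between fixed endpoints, of which there are only finitely many.
\item It follows that every shortest path can be chosen to use only tight arcs. Tight arcs are never deleted, so all distances are preserved in $G'$.
\end{itemize}
Consequently, deleting all non-tight arcs simultaneously is exactly the exhaustive application of Rule~\ref{rule:shortcut}, and it is safe by induction using the argument above. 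Zero-weight coasting arcs are automatically tight, because weights are nonnegative.

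\textbf{Running time.} We compute all-pairs shortest paths on $G$ with Floyd--Warshall in $O(|V(G)|^3)$. A topological-order dynamic program also works, since $G$ is a DAG, and fits within the same bound. We then scan each arc and compare $w(e)$ with $\mathrm{dist}$, which costs $O(|E(G)|)\subseteq O(|V(G)|^2)$.

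The main obstacle is the exhaustiveness argument, namely that distances remain unchanged after deletions. The tight-arc replacement argument is the step I would write most carefully.
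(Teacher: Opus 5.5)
Your proof is correct and follows the paper's route: safety comes from replacing $e$ by a strictly cheaper shortest path, and the running time from Floyd--Warshall followed by one scan over the arcs. Your extra argument that distances computed once in $G$ stay valid after deletions, so that a single filtering pass is exhaustive application, covers a point the paper leaves implicit; it can be shortened, since by subpath optimality every arc on a shortest path is already tight and no iterated replacement is needed.
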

\begin{proof}
  Correctness follows by a simple switching argument: Consider any
  feasible trajectory $P$ that uses~$e$, then we can obtain another
  feasible trajectory $P'$ by replacing $e$ with the shortest path
  between $(\alpha,t)$ and $(\beta,t')$. By construction,
  $\val(P')<\val(P)$ and, thus, $P$ can not be an optimal
  trajectory.

  For the runtime, we initially compute a distance matrix using
  any all-pair-shortest-path algorithm (say, Floyd–Warshall),
  which is possible in time $O(|V(G)|^3)$. Afterwards, we simply scan
  once over all edges in time $O(|E(G)|)$ and filter out the once that
  are too expensive.
\end{proof}

\begin{lemma}
  Rule~\ref{rule:far-away} is safe and can be applied exhaustively in time $O(|V(G)|^3)$. 
\end{lemma}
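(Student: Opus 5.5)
The plan mirrors the proofs of the previous three lemmas: a short exchange argument for safety, followed by a running-time analysis that reuses the all-pairs distance computation from the Shortcut Rule.

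For safety, let $G$ be the input network and $G'$ the network obtained by deleting an arc $e=\big((\alpha,t),(\beta,t')\big)$ to which Rule~\ref{rule:far-away} applies. Since $G'$ is a subgraph of $G$, we have $\val(G')\geq\val(G)$, so it suffices to show that some optimal feasible trajectory of $G$ avoids $e$. Fix a feasible trajectory $P$ with $\val(P)=\val(G)\leq\mathit{ub}$ and suppose $P$ uses $e$. Then $P$ splits into three pieces:
\begin{itemize}
\item a prefix path from $(\alpha_s,t_0)$ to $(\alpha,t)$,
\item the arc $e$ itself,
\item a suffix path from $(\beta,t')$ to $(\alpha_s,t_{\max})$.
\end{itemize}
All weights are nonnegative, and distances are lower bounds on the cost of any path, so the prefix costs at least $\mathrm{dist}\big((\alpha_s,t_0),(\alpha,t)\big)$ and the suffix costs at least $\mathrm{dist}\big((\beta,t'),(\alpha_s,t_{\max})\big)$. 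Hence
\[
\val(P)\geq \mathrm{dist}\big((\alpha_s,t_0),(\alpha,t)\big)+w(e)+\mathrm{dist}\big((\beta,t'),(\alpha_s,t_{\max})\big)>\mathit{ub},
\]
which contradicts $\val(P)\leq\mathit{ub}$. So no optimal trajectory uses $e$, every optimal trajectory of $G$ survives in $G'$, and $\val(G')=\val(G)$.

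One subtlety is that $\mathit{ub}$ must be a valid upper bound, i.e.\ $\val(G)\leq\mathit{ub}$; this is the same standing assumption used in the earlier lemmas. A second subtlety is exhaustive application: deleting arcs can only increase distances, so the rule could become applicable again. The argument above, however, only needs that each deletion preserves some optimal trajectory $P$ with $\val(P)\leq\mathit{ub}$, and this invariant is maintained inductively along any sequence of applications.

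For the running time, compute all-pairs shortest path distances once with Floyd--Warshall in time $O(|V(G)|^3)$. We only need the row for the source $(\alpha_s,t_0)$ and the column for the sink $(\alpha_s,t_{\max})$, so two single-source computations on the acyclic network would suffice, but the cubic bound is what is claimed. Then scan all arcs once, in time $O(|E(G)|)\subseteq O(|V(G)|^2)$, and delete those violating the inequality.

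The main obstacle is justifying that this single pass is exhaustive, since distances in the reduced graph may grow. The plan is to state exhaustiveness with respect to the distances of the original $G$, as the Shortcut Rule proof implicitly does. Alternatively, one can observe that any arc whose deletion changes a relevant distance lies on no path of cost at most $\mathit{ub}$ through the depot; recomputing distances then cannot create a new violation among arcs that lie on some $(\alpha_s,t_0)$–$(\alpha_s,t_{\max})$ path of cost at most $\mathit{ub}$. So one pass with the original distances already reaches the fixed point.
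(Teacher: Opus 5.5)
Your proof is correct and follows the paper's argument: it uses the same prefix--arc--suffix bound $\val(P)\geq \mathrm{dist}\big((\alpha_s,t_0),(\alpha,t)\big)+w(e)+\mathrm{dist}\big((\beta,t'),(\alpha_s,t_{\max})\big)>\mathit{ub}$ for safety, and the same Floyd--Warshall-plus-one-scan running time that the paper inherits from Rule~\ref{rule:shortcut}. Your extra argument that one pass with the original distances is already exhaustive is sound and goes beyond what the paper states: every surviving arc lies on a source--sink path of cost at most $\mathit{ub}$, all arcs of that path survive, and so the relevant distances do not change.
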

\begin{proof}
  For safety, observe that for any feasible trajectory $P$ that uses
  $e$ it holds:
  \[\val(P)\geq \mathrm{dist}\big((\alpha_s,t_0),(\alpha,t)\big)+w(e)+\mathrm{dist}\big((\beta,t'),(\alpha_s,t_{\max})\big).\]
  This is because in order to use $e$, the trajectory must
  reach $(\alpha,t)$ (which costs 
  $\mathrm{dist}\big((\alpha_s,t_0),(\alpha,t)\big)$) and then get from
  $(\beta,t')$ to the depot (which costs at least
  $\mathrm{dist}\big((\beta,t'),(\alpha_s,t_{\max})\big)$). Hence, we
  can safely delete $e$ since $P$ cannot be optimal because of
  $\val(P)>\mathit{ub}$.
  \noindent
  The runtime is equivalent to Rule~\ref{rule:shortcut}.
\end{proof}

\subsection{Details of the Constructive Heuristic}
\label{appendix:init}

The \Algo{init} heuristic shown in Figure~\ref{figure:constructive} is
an adaptation of the \emph{insertion heuristics} for the asymmetric
traveling salesman problem with time windows~\cite{AscheuerFG01}.  In
order to keep the description simple, we will assume in this section that all
transfers are feasible. We may artificially enforce this
requirement by assigning unreasonable high $\Delta V$ values to the
unwanted transfers.

\begin{figure}[htb]
  \begin{minipage}[t]{0.49\textwidth}
    \input{code/swan-init}
  \end{minipage}
  \begin{minipage}[t]{0.49\textwidth}
    \input{code/insert}    
  \end{minipage}
  \caption{The \Algo{init} heuristic that generates initial
    feasible trajectories by
    successively inserting elements to a growing trajectory.}
  \label{figure:constructive}
\end{figure}

\begin{lemma}
  Algorithm \Algo{init} runs in time
  $O(|\Alpha|^2\cdot|\tgrid|)$ and always outputs a feasible trajectory.
\end{lemma}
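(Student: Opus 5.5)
We have not seen the pseudocode, so the plan rests on the description in the main text. \Algo{init} starts from $\pi=(\alpha_s,\alpha_s)$ and $\sigma=(t_0,t_{\max})$. It then repeatedly picks an unvisited body and inserts it at a locally optimal position of $\pi$ together with an epoch in $\sigma$. The proof has two parts: feasibility, shown by a loop invariant, and the running time, obtained by counting iterations and the cost of one insertion.

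\emph{Feasibility.} The invariant is that, after every iteration, $\sigma$ is non-decreasing along $\pi$, its first entry is $t_0$ and its last entry is $t_{\max}$. Under the standing assumption that all transfers are feasible (unwanted ones carry huge $\Delta V$), consecutive pairs $(\pi_j,\sigma_j)\to(\pi_{j+1},\sigma_{j+1})$ are then arcs of $G$ whenever $\sigma_j<\sigma_{j+1}$. Equal epochs are handled by coasting arcs or zero-length waits.

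\begin{itemize}
\item \emph{Base case.} The invariant holds for $(\alpha_s,\alpha_s)$ with $(t_0,t_{\max})$.
\item \emph{Step.} The insertion routine only considers a position $j$ and an epoch $t\in\tgrid$ with $\sigma_j\le t\le\sigma_{j+1}$. Such a pair always exists, e.g.\ $t=\sigma_j$, so the insertion cannot fail. This is where we would like $|\tgrid|>|\Alpha|$: it leaves room for strictly increasing epochs if those are required. Inserting within the slot keeps the invariant.
\item \emph{Termination.} Each iteration marks one more body as visited, so after $|\Alpha|-1$ iterations every body appears in $\pi$. The resulting path runs from $(\alpha_s,t_0)$ to $(\alpha_s,t_{\max})$ and visits all of $\Alpha$, which is exactly a feasible trajectory.
\end{itemize}

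\emph{Running time.} There are $|\Alpha|-1$ iterations. In one iteration the insertion routine tries every slot $j$ and every epoch $t$ inside that slot. The change in cost of inserting $\beta$ at $(j,t)$ is
\[
  w\big((\pi_j,\sigma_j),(\beta,t)\big)+w\big((\beta,t),(\pi_{j+1},\sigma_{j+1})\big)-w\big((\pi_j,\sigma_j),(\pi_{j+1},\sigma_{j+1})\big),
\]
which is $O(1)$ oracle calls. The slots partition the time window, so the candidate epochs over all slots number $O(|\tgrid|+|\pi|)=O(|\tgrid|)$, using $|\Alpha|<|\tgrid|$. One iteration therefore costs $O(|\tgrid|)$, and the total is $O(|\Alpha|\cdot|\tgrid|)\subseteq O(|\Alpha|^2\cdot|\tgrid|)$.

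The bound leaves slack. If the pseudocode instead tries every remaining body at each step (cheapest insertion) or scans all slots against all epochs, it costs $O(|\Alpha|\cdot|\tgrid|)$ per iteration. That still gives $O(|\Alpha|^2|\tgrid|)$, and the argument above adapts directly.

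\emph{Main obstacle.} The hardest part is confirming that the insertion routine always has an admissible slot, i.e.\ that it never needs a strictly increasing epoch in a slot with $\sigma_j=\sigma_{j+1}$. I would first try to argue that equal epochs are admissible via coasting or zero-duration waits. Failing that, I would fall back on the pigeonhole slack from $|\tgrid|>|\Alpha|$, for example by choosing the epoch so that each slot keeps a gap large enough for later insertions.
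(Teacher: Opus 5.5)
Your running-time argument is fine and matches the paper's: $|\Alpha|-1$ calls of the insertion routine, each scanning $O(|\Alpha|)$ positions and $O(|\tgrid|)$ candidate epochs. Your sharper per-call estimate relies on pseudocode details you have not seen, but it only strengthens the bound.

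The gap is in feasibility. Your invariant only asks that $\sigma$ be non-decreasing, and you claim that equal consecutive epochs are bridged ``by coasting arcs or zero-length waits.'' They are not. A coasting arc goes from $(\alpha,t)$ to $(\alpha,t+dt)$, so it neither keeps the epoch fixed nor changes the body. Every transfer arc in $E_T$ requires $t<t'$. Consequently, for $\beta\neq\pi_j$ there is no arc (indeed no path) from $(\pi_j,\sigma_j)$ to $(\beta,\sigma_j)$. So the choice $t=\sigma_j$, which you use to conclude that ``the insertion cannot fail,'' yields a pair $(\pi,\sigma)$ that is not a trajectory.

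Under the standing assumption that all transfers are feasible, the paper's criterion is that $\pi$ starts and ends at $\alpha_s$ and $\sigma$ is \emph{strictly} increasing from $t_0$ to $t_{\max}$. That is the invariant you have to maintain, with each insertion placed at an epoch strictly between $\sigma_j$ and $\sigma_{j+1}$.

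What you list as a fallback is therefore the actual proof, and it is simpler than you fear: no gaps need to be reserved for later insertions. Before each insertion, $k\le|\Alpha|-2$ bodies have been inserted, so $\sigma$ occupies $k+2\le|\Alpha|$ distinct grid points, among them $t_0=\min\tgrid$ and $t_{\max}=\max\tgrid$. Every unused grid point lies strictly inside some slot $(\sigma_j,\sigma_{j+1})$, and there are $|\tgrid|-k-2\ge|\tgrid|-|\Alpha|$ of them. Hence, provided $|\tgrid|>|\Alpha|$, an admissible slot exists regardless of which slots the earlier locally optimal insertions used.

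The paper's proof simply asserts that the line choosing $t$ keeps $\sigma$ strictly increasing. You are right that this silently needs $|\tgrid|>|\Alpha|$, which the benchmark set enforces; otherwise no strictly increasing schedule with $|\Alpha|+1$ entries exists at all.
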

\begin{proof}
  For the correctness, recall that in this section we assume that $G$
  contains all possible transfers, i.e., $(\pi,\sigma)$ is feasible if
  $\pi$ is a permutation with the elements starting and ending with
  $\alpha_s$ and $\sigma$ is strictly increasing with $\sigma[0]=t_0$
  and $\sigma[n]=t_{\max}$. The former is ensured by lines~\ref{alog:swan-init:pi0}
  and~\ref{alog:swan-init:perm} in \Algo{init}, the latter by
  Line~\ref{alog:swan-init:sigma0} in \Algo{init} and
  Line~\ref{alog:insert:t} in \Algo{insert}.

  For the runtime note that \Algo{insert} is called $n-1 =
  |\Alpha|-1$ times in Line~\ref{alog:swan-init:insert}. Each call of
  \Algo{insert} on the other hand contains a \texttt{for}-loop over
  $|\Alpha|$ elements (Line~\ref{algo:insert:loop} in \Algo{insert}) and the choice of
  $t$ in Line~\ref{alog:insert:t}, which requires the inspection of
  $O(|\tgrid|)$ elements. Hence, \Algo{insert} runs in time
  $O(|\Alpha|\cdot|\tgrid|)$ and, therefore, \Algo{init} in
  time $O(|\Alpha|^2\cdot|\tgrid|)$.
\end{proof}

\subsection{Details of the Improving Heuristic}
\label{appendix:improve}

As in the previous section, we will assume for simplicity that all
transfers are feasible. Hence, every pair~$(\pi,\sigma)$ represents a
feasible trajectory $P$ and we can define
$\val(\pi,\sigma)\coloneq\val(P)$.  Let us further call a trajectory
\emph{2-opt} if the swap heuristic cannot be applied, \emph{nudge-opt}
if the nudge heuristic cannot be applied, and \emph{2-nudge-opt} if
the \Algo{swan} heuristic cannot be applied.

The \Algo{b-swan}
heuristics maintains a \emph{priority queue} $Q$ in the form of a
\emph{min-heap} using trajectories $(\pi,\sigma)$ as values and
$\val(\pi,\sigma)$ as keys. Hence, we can store newly discovered
trajectories in time $O(\log |Q|)$ and obtain the currently cheapest
trajectory in time $O(\log |Q|)$. The idea then is to initialize the
queue with the set $S$ of given trajectories. While the queue is not
empty, we pick the best $(\pi,\sigma)$ and analyze it:
\begin{itemize}
\item If $\pi$ is \emph{not} 2-opt, we add a trajectory $(\pi',\sigma)$ for
  every possible swap to the queue;
\item if $\pi$ \emph{is} 2-opt, but $\sigma$ is \emph{not} nudge-opt,
  we add $(\pi,\sigma')$ for every possible nudge to the
  queue;
\item if $\pi$ is 2-opt and $\sigma$ is nudge-opt and
  $\val(\pi,\sigma)$ is smaller than the currently best solution, we
  register $(\pi,\sigma)$ as a new solution and we add
  randomly perturbed trajectories $(\pi',\sigma')$ back to the heap.
\end{itemize}

Since the priority queue can grow quickly (whenever we remove an
element from the heap, we add back new elements unless we extracted a
2-nudge-opt trajectory that does not improve the current best
solution), we need a mechanism to reduce it again. We realize this by
adapting a beam search to this setting: Given a \emph{beamwidth} $w$
and a \emph{shrink-factor} $f$ as arguments, we will shrink the
priority queue to $fw$ elements once it contains more than $w$
elements. Figure~\ref{algorithm:swan} contains the details of the
\Algo{b-swan} metaheuristic, and the following lemma observes that
\Algo{b-swan} always returns a feasible trajectory in
Line~\ref{algo:swan:return}.

\begin{lemma}
  The \Algo{b-swan} metaheuristic always returns a feasible trajectory if
  $S$ contains only feasible trajectories.
\end{lemma}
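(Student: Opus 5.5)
The natural approach is an invariant argument over the run of \Algo{b-swan}: every trajectory that ever enters the priority queue $Q$, and every trajectory registered as the current best solution, is feasible. Under the standing assumption of this appendix that all transfers are feasible, a pair $(\pi,\sigma)$ is feasible exactly when $\pi$ starts and ends with $\alpha_s$ and contains every element of $\Alpha$, and $\sigma$ is strictly increasing with $\sigma[0]=t_0$ and $\sigma[n]=t_{\max}$. This is the same characterization used in the correctness proof for \Algo{init}. The plan is therefore to check that each operation that writes into $Q$ or into the best-solution register preserves these two properties. The returned trajectory in Line~\ref{algo:swan:return} is then feasible.

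The steps, in order, are as follows.
\begin{enumerate}
\item \textbf{Base case.} $Q$ is initialized with $S$, which by hypothesis contains only feasible trajectories.
\item \textbf{Swap moves.} A swap exchanges two entries of $\pi$ and keeps $\sigma$ fixed. The multiset of bodies is therefore unchanged, so every body is still visited and $\sigma$ is still valid. I will argue that swaps only act on the inner indices $1,\dots,n-1$, or note that swapping $\alpha_s$ with a copy of itself is harmless, so the endpoints remain $\alpha_s$.
\item \textbf{Nudge moves.} A nudge replaces one inner epoch by its grid neighbour $\pm dt$, and $\pi$ stays fixed. Here I must verify that only nudges keeping $\sigma$ strictly increasing and inside $[t_0,t_{\max}]$, with the endpoints fixed, are generated as ``possible'' nudges. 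I would read this off the definition of the move set in Figure~\ref{algorithm:swan}.
\item \textbf{Perturbations.} The random perturbations added back after a new best solution is registered are compositions of swaps and nudges, or are filtered for validity. Either way, they preserve the invariant.
\item \textbf{Shrinking.} Shrinking the queue to $fw$ elements only deletes elements, so it cannot introduce infeasible ones.
\item \textbf{Registered solution.} The registered solution is always an element extracted from $Q$, hence feasible.
\end{enumerate}

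One more point needs care: whether the algorithm can return without ever registering a solution. I would handle it by noting that the best solution is initialized from $S$, for example its cheapest element, or that the first 2-nudge-opt extraction always beats the initial value $\infty$. Such an extraction must occur, because each swap or nudge strictly decreases $\val$ and the set of pairs $(\pi,\sigma)$ is finite.

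The main obstacle is the nudge and perturbation step. Feasibility there depends on boundary details of the pseudocode: no nudge may collapse two consecutive epochs, move an epoch off $\tgrid$, or shift $\sigma[0]$ or $\sigma[n]$. If the pseudocode does not filter such moves explicitly, I would instead argue that these moves yield infinitely costly or invalid entries that are never considered possible improvements, using the convention that unwanted transfers receive unreasonably high $\Delta V$.
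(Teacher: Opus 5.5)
Your proposal is correct and follows the same route as the paper, which likewise only shows that every trajectory inserted into $Q$ is feasible. It uses two facts: the swap and perturbation ($k$-swap) subroutines permute only the internal entries of $\pi$, which is harmless under the all-transfers-feasible assumption, and the \Algo{nudge} subroutine explicitly checks feasibility before inserting.

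Rely on those two facts from the pseudocode rather than on your fallbacks, which do not hold up on their own. The high-$\Delta V$ convention does not cover nudges that make $\sigma$ non-strictly-increasing. Whether a 2-nudge-opt trajectory is ever extracted depends on the termination criterion, since a fixed iteration budget may stop earlier.
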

\begin{proof}
  It is sufficient to show that all trajectories that are inserted
  into $Q$ are feasible. By assumption, this is the case in
  Line~\ref{algo:swan:init}. Given a feasible trajectory
  $(\pi,\sigma)$ with $\pi =
  (\alpha_s=\alpha_{i_0},\alpha_{i_1},\dots,\alpha_{i_n-1},\alpha_{i_n}=\alpha_s)$,
  any trajectory $(\pi',\sigma)$ with
  $\pi'=(\alpha_s=\alpha_{i_0},\text{perm}(\alpha_{i_1},\dots,\alpha_{i_n-1}),\alpha_{i_n}=\alpha_s)$
  is feasible since we assume $G$ to contain all possible transfers in
  this section, whereby
  $\text{perm}(\alpha_{i_1},\dots,\alpha_{i_n-1})$ denotes an arbitrary
  permutation of the elements
  $\alpha_{i_1},\dots,\alpha_{i_n-1}$. Hence, trajectories inserted in
  Line~\ref{algo:swan:insert-swap} and~\ref{algo:swan:insert-perturb}
  are feasible,
  as the corresponding subroutines permute only internal elements of
  $\pi$. On the other hand, trajectories inserted in
  Line~\ref{algo:swan:insert-nudge} are feasible since the
  \Algo{nudge} subroutine actively checks feasibility in Line~\ref{algo:nudge:feasability}.
\end{proof}

The precise runtime of the \Algo{b-swan} metaheuristic depends on the termination
criteria used in Line~\ref{algo:swan:termination}. Suitable options
are a fixed number of iterations, the fact that $Q$ runs empty, or that no new
solution was found for a certain amount of rounds. Independent of this
criteria, the running time of each iteration of the
\texttt{while}-loop can be bounded as follows:

\clearpage

\begin{lemma}
  Assuming $w\geq n=|\Alpha|$, a single iteration of the
  \texttt{while}-loop in Line~\ref{algo:swan:termination} runs in 
  $O(w^2\log w)$.
\end{lemma}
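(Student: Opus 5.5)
We bound the cost of one iteration of the \texttt{while}-loop by splitting it into its parts: extracting the minimum, testing 2-optimality and nudge-optimality, generating and inserting successors, and possibly shrinking the queue. Throughout we keep the invariant that at the start of every iteration $|Q|\leq w$. Initially $|Q|=|S|$; we assume $|S|\leq w$, or else the first shrink step restores the bound. After each iteration the shrink step cuts the queue back to $fw\leq w$ elements whenever it exceeds $w$. Under this invariant, extracting the cheapest trajectory from the min-heap costs $O(\log w)$.

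Next we count the successors. A trajectory has $n+1=|\Alpha|+1$ entries in $\pi$. Swaps of internal indices give at most $\binom{n-1}{2}=O(n^2)$ candidates, nudges give at most $2(n-1)=O(n)$, and the random perturbations are a constant number (or at most $O(n^2)$). Each candidate is evaluated incrementally. A swap of positions $j,k$, or a nudge of position $j$, changes only the $O(1)$ arcs adjacent to those positions. Its new value is therefore obtained from $\val(\pi,\sigma)$ in $O(1)$ oracle calls, and its feasibility check is also $O(1)$. Deciding whether $\pi$ is 2-opt and $\sigma$ nudge-opt thus costs $O(n^2)$. Building the successor trajectories explicitly costs $O(n)$ each to copy the vectors, or $O(1)$ each with a pointer-plus-modification representation. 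We accept the cost of copying, $O(n^2)\cdot O(n)=O(n^3)$, and use $w\geq n$ to bound it by $O(w^2\cdot n)$. We return to this below.

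Each of the $O(n^2)$ insertions costs $O(\log(w+n^2))$. Since $n\leq w$, we have $w+n^2\leq 2w^2$, so one insertion costs $O(\log w)$ and all insertions together cost $O(n^2\log w)\subseteq O(w^2\log w)$. The shrink step keeps the $fw$ best of at most $w+O(n^2)=O(w^2)$ elements. Doing this by sorting or heapifying costs $O(w^2\log w)$, which is the dominating term. Summing the parts gives $O(\log w + n^2 + n^2\log w + w^2\log w)=O(w^2\log w)$.

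The main obstacle is the copying cost $O(n^3)$. It is only absorbed by the bound if $n\leq \log w$, or if copying is avoided. I would first try the cleaner route: represent each queued trajectory as a reference to its parent plus the applied swap or nudge, which takes $O(1)$ space, and materialize it only when extracted, at cost $O(n)$. With this, generating successors costs $O(n^2)$ and everything fits in $O(w^2\log w)$. If the paper's pseudocode copies vectors explicitly, the fallback is to note that each copy has length $n+1\leq w+1$ and to argue that the per-iteration work is $O(n^2\cdot n)$. That equals $O(w^2\cdot n)$, which gives the stated bound only if the intended reading of $O(w^2\log w)$ charges the vector work to the queue size instead. So I would settle on the lazy representation to make the claim hold as stated.
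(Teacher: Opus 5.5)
Your argument has the same skeleton as the paper's proof. You bound the queue by $O(w^2)$ so each heap operation costs $O(\log w)$, count at most $n^2\le w^2$ swap successors and $O(n)$ nudge successors, evaluate each successor's value and feasibility in $O(1)$ because only a constant number of arcs change, and absorb the shrink step.

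The paper is slightly leaner in two places. It uses that only one of the swap/nudge/perturb branches runs per iteration, and it bounds the shrink by $O(fw\log w)$ rather than by sorting all $O(w^2)$ entries. Its perturbation step also inserts up to $fw\le w$ trajectories, not the constant or $O(n^2)$ number you assumed. That only adds $O(w\log w)$ and does not change your bound.

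The real difference is the copying cost. The paper's proof never charges for building the length-$(n+1)$ vectors of each enqueued successor; it only asserts constant-time value and feasibility checks. It therefore tacitly assumes $O(1)$ construction per successor. With explicit copies, enqueuing every swap costs $\Theta(n^3)$, which is not $O(w^2\log w)$ when $w=\Theta(n)$.

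Your parent-pointer-plus-modification representation fixes this. Each successor is materialized in $O(n)$ only when it is extracted, from a parent that was itself materialized at its own extraction. That is a valid justification of the paper's tacit assumption, so your version is the more careful proof of the stated bound.

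One small correction to your discussion: explicit copying is absorbed not only when $n\le\log w$ but whenever $n^3=O(w^2\log w)$, e.g.\ for $w\ge n^{3/2}$. The issue only bites when $w$ is close to $n$.
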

\begin{proof}
  We first inspect the maximum size the queue $Q$ can have when the
  algorithm starts a iteration of Line~\ref{algo:swan:termination}. By
  Line~\ref{algo:swan:shrink1} and~\ref{algo:swan:shrink2}, the queue
  is shrink to $fw$ elements if it contains more than $w$
  trajectories. Since in Line~\ref{algo:swan:insert-swap} we insert at
  most $n^2\in O(w^2)$ elements, in
  Line~\ref{algo:swan:insert-nudge} at most $n\in
  O(w)$ elements, and in Line~\ref{algo:swan:insert-perturb} at most
  $fw\in O(w)$ elements, the maximum size of the queue at any point of
  the algorithm is $O(w+w^2)=O(w^2)$. Hence, extracting and inserting
  elements to the queue requires at most $O(\log w^2)=O(\log w)$ time.

  It immediately follows that the \Algo{shrink-queue} subroutine
  takes time at most $O(fw\log w)$, the \Algo{swap} routine
  (including Line~\ref{algo:swan:insert-swap} in \Algo{b-swan}) at
  most $O(w^2\log w)$ steps, and both, 
  the \Algo{nudge} and the \Algo{perturb} algorithm
  (including the corresponding insert in \Algo{b-swan}) at
  most $O(w\log w)$ operations. Note that in all subroutines, we can
  check whether the new trajectories are
  feasible and compute their value in constant time, as we start from a
  feasible trajectory (for which we know its value) and only perform
  local modifications.
  Since only one of these subroutines is
  called and since $f\leq 1$, the dominating term for of the
  \texttt{while}-loop is is $O(w^2\log
  w)$. 
\end{proof}

\begin{figure}[htbp]
  \begin{minipage}[t]{0.45\textwidth}
    \input{code/swan}
  \end{minipage}
  \begin{minipage}[t]{0.54\textwidth}
    \input{code/shrink}
    
    \input{code/swaps}

    \input{code/nudges}

    \input{code/perturb}
  \end{minipage}
  \caption{The details of the \Algo{b-swan} heuristic. As input it
    obtains a time-expanded network $G$, a set $S$ of feasible
    trajectories in permutation-schedule representation, a beamwidth
    $w\geq n$, a
    shrink-factor $f\leq 1$, and a value $k>2$
    indicating that perturbations are performed using $k$-swaps.}
  \label{algorithm:swan}
\end{figure}

\clearpage

\begin{landscape}
  \begin{table}
  \center
    \caption{\linespread{0.9}\selectfont Running Gurobi with the encoding from
      Section~\ref{section:csp} on all instances from the benchmarkset
      described in Section~\ref{sec:benchmarkset}. An entry for the
      lower bound (LB), upper bound (UB), or the gap between the two is
      bold (and \textcolor{fg}{green} for the gap) if it is the best found within the table. The last three
      columns contain these virtual best values. A column with the
      suffix ``-pp'' means that the preprocessing of
      Section~\ref{section:kernelization} is applied before the solver
    is invoked; ``-init'' means that Gurobi is initialized with a
    solution found by \Algo{b-swan}; and ``-swan'' indicates that we
    improve every intermediate solution reported by Gurobi using
    \Algo{swan}. Runs marked with ``TO'' were terminated after 4
    hours, runs marked with ``MO'' used more than 10 GB of memory.}
    \label{table:gurobi}
    \scalebox{0.55}{\begin{tabular}{@{}ll|lllr|lllr|lllr|lllr|lllr|lll@{}}
\toprule
   &     & Gurobi             &                       &                                      & \multicolumn{1}{l|}{}         & Gurobi-pp          &                       &                                      & \multicolumn{1}{l|}{}         & Gurobi-init         &                     &                                       & \multicolumn{1}{l|}{}         & \multicolumn{2}{l}{Gurobi-swan}          &                                      & \multicolumn{1}{l|}{}         & \multicolumn{2}{l}{Gurobi-pp-swan}       &                                       & \multicolumn{1}{l|}{}         &            &            &        \\
$|\Alpha|$  & $|\tgrid|$   & LB                 & UB                    & Gap                                  & \multicolumn{1}{l|}{Time (s)} & LB                 & UB                    & Gap                                  & \multicolumn{1}{l|}{Time (s)} & LB                  & UB                  & Gap                                   & \multicolumn{1}{l|}{Time (s)} & LB                 & UB                  & Gap                                  & \multicolumn{1}{l|}{Time (s)} & LB                 & UB                  & Gap                                   & \multicolumn{1}{l|}{Time (s)} & VLB      & VUB      & VGap \\ \midrule
\multicolumn{25}{c}{Asteroid-belt instances} \\  \midrule
5  & 6   & \textbf{12318.13}  & \textbf{12318.13}     & {\color{fg} \textbf{0.00}} & 0.3                           & \textbf{12318.13}  & \textbf{12318.13}     & {\color{fg} \textbf{0.00}} & 0.04                          & \textbf{12318.13}   & \textbf{12318.13}   & {\color{fg} \textbf{0.00}}  & 0.03                          & \textbf{12318.13}  & \textbf{12318.13}   & {\color{fg} \textbf{0.00}} & 0.04                          & \textbf{12318.13}  & \textbf{12318.13}   & {\color{fg} \textbf{0.00}}  & 0.03                          & 12318.13   & 12318.13   & 0.00   \\
5  & 11  & \textbf{10938.92}  & \textbf{10938.92}     & {\color{fg} \textbf{0.00}} & 0.07                          & \textbf{10938.92}  & \textbf{10938.92}     & {\color{fg} \textbf{0.00}} & 0.05                          & \textbf{10938.92}   & \textbf{10938.92}   & {\color{fg} \textbf{0.00}}  & 0.05                          & \textbf{10938.92}  & \textbf{10938.92}   & {\color{fg} \textbf{0.00}} & 0.1                           & \textbf{10938.92}  & \textbf{10938.92}   & {\color{fg} \textbf{0.00}}  & 0.04                          & 10938.92   & 10938.92   & 0.00   \\
5  & 21  & \textbf{10533.94}  & \textbf{10533.94}     & {\color{fg} \textbf{0.00}} & 0.1                           & \textbf{10533.94}  & \textbf{10533.94}     & {\color{fg} \textbf{0.00}} & 0.08                          & \textbf{10533.94}   & \textbf{10533.94}   & {\color{fg} \textbf{0.00}}  & 0.1                           & \textbf{10533.94}  & \textbf{10533.94}   & {\color{fg} \textbf{0.00}} & 0.1                           & \textbf{10533.94}  & \textbf{10533.94}   & {\color{fg} \textbf{0.00}}  & 0.05                          & 10533.94   & 10533.94   & 0.00   \\
5  & 41  & \textbf{10459.16}  & \textbf{10459.16}     & {\color{fg} \textbf{0.00}} & 0.2                           & \textbf{10459.16}  & \textbf{10459.16}     & {\color{fg} \textbf{0.00}} & 0.1                           & \textbf{10459.16}   & \textbf{10459.16}   & {\color{fg} \textbf{0.00}}  & 0.3                           & \textbf{10459.16}  & \textbf{10459.16}   & {\color{fg} \textbf{0.00}} & 0.4                           & \textbf{10459.16}  & \textbf{10459.16}   & {\color{fg} \textbf{0.00}}  & 0.08                          & 10459.16   & 10459.16   & 0.00   \\
5  & 81  & \textbf{10431.62}  & \textbf{10431.62}     & {\color{fg} \textbf{0.00}} & 0.8                           & \textbf{10431.62}  & \textbf{10431.62}     & {\color{fg} \textbf{0.00}} & 0.4                           & \textbf{10431.62}   & \textbf{10431.62}   & {\color{fg} \textbf{0.00}}  & 1.2                           & \textbf{10431.62}  & \textbf{10431.62}   & {\color{fg} \textbf{0.00}} & 1.3                           & \textbf{10431.62}  & \textbf{10431.62}   & {\color{fg} \textbf{0.00}}  & 0.2                           & 10431.62   & 10431.62   & 0.00   \\
5  & 161 & \textbf{10431.60}  & \textbf{10431.60}     & {\color{fg} \textbf{0.00}} & 3.4                           & \textbf{10431.60}  & \textbf{10431.60}     & {\color{fg} \textbf{0.00}} & 1.9                           & \textbf{10431.60}   & \textbf{10431.60}   & {\color{fg} \textbf{0.00}}  & 5.7                           & \textbf{10431.60}  & \textbf{10431.60}   & {\color{fg} \textbf{0.00}} & 5.8                           & \textbf{10431.60}  & \textbf{10431.60}   & {\color{fg} \textbf{0.00}}  & 0.8                           & 10431.60   & 10431.60   & 0.00   \\ \midrule
10 & 11  & \textbf{38734.94}  & \textbf{38734.94}     & {\color{fg} \textbf{0.00}} & 0.2                           & \textbf{38734.94}  & \textbf{38734.94}     & {\color{fg} \textbf{0.00}} & 0.08                          & \textbf{38734.94}   & \textbf{38734.94}   & {\color{fg} \textbf{0.00}}  & 0.1                           & \textbf{38734.94}  & \textbf{38734.94}   & {\color{fg} \textbf{0.00}} & 0.3                           & \textbf{38734.94}  & \textbf{38734.94}   & {\color{fg} \textbf{0.00}}  & 0.07                          & 38734.94   & 38734.94   & 0.00   \\
10 & 21  & \textbf{28173.63}  & \textbf{28173.63}     & {\color{fg} \textbf{0.00}} & 0.5                           & \textbf{28173.63}  & \textbf{28173.63}     & {\color{fg} \textbf{0.00}} & 0.2                           & \textbf{28173.63}   & \textbf{28173.63}   & {\color{fg} \textbf{0.00}}  & 0.4                           & \textbf{28173.63}  & \textbf{28173.63}   & {\color{fg} \textbf{0.00}} & 0.9                           & \textbf{28173.63}  & \textbf{28173.63}   & {\color{fg} \textbf{0.00}}  & 0.2                           & 28173.63   & 28173.63   & 0.00   \\
10 & 41  & \textbf{25290.47}  & \textbf{25290.47}     & {\color{fg} \textbf{0.00}} & 1.6                           & \textbf{25290.47}  & \textbf{25290.47}     & {\color{fg} \textbf{0.00}} & 1.2                           & \textbf{25290.47}   & \textbf{25290.47}   & {\color{fg} \textbf{0.00}}  & 1.7                           & \textbf{25290.47}  & \textbf{25290.47}   & {\color{fg} \textbf{0.00}} & 2.1                           & \textbf{25290.47}  & \textbf{25290.47}   & {\color{fg} \textbf{0.00}}  & 0.6                           & 25290.47   & 25290.47   & 0.00   \\
10 & 81  & \textbf{24850.79}  & \textbf{24850.79}     & {\color{fg} \textbf{0.00}} & 11.6                          & \textbf{24850.79}  & \textbf{24850.79}     & {\color{fg} \textbf{0.00}} & 5.7                           & \textbf{24850.79}   & \textbf{24850.79}   & {\color{fg} \textbf{0.00}}  & 8.2                           & \textbf{24850.79}  & \textbf{24850.79}   & {\color{fg} \textbf{0.00}} & 8.4                           & \textbf{24850.79}  & \textbf{24850.79}   & {\color{fg} \textbf{0.00}}  & 5.1                           & 24850.79   & 24850.79   & 0.00   \\
10 & 161 & \textbf{24718.40}  & \textbf{24718.40}     & {\color{fg} \textbf{0.00}} & 55.1                          & \textbf{24718.40}  & \textbf{24718.40}     & {\color{fg} \textbf{0.00}} & 48.1                          & \textbf{24718.40}   & \textbf{24718.40}   & {\color{fg} \textbf{0.00}}  & 68.7                          & \textbf{24718.40}  & \textbf{24718.40}   & {\color{fg} \textbf{0.00}} & 57.5                          & \textbf{24718.40}  & \textbf{24718.40}   & {\color{fg} \textbf{0.00}}  & 39                            & 24718.40   & 24718.40   & 0.00   \\ \midrule
20 & 21  & \textbf{118104.77} & \textbf{118104.77}    & {\color{fg} \textbf{0.00}} & 5.1                           & \textbf{118104.77} & \textbf{118104.77}    & {\color{fg} \textbf{0.00}} & 5.2                           & \textbf{118104.77}  & \textbf{118104.77}  & {\color{fg} \textbf{0.00}}  & 3.9                           & \textbf{118104.77} & \textbf{118104.77}  & {\color{fg} \textbf{0.00}} & 5.1                           & \textbf{118104.77} & \textbf{118104.77}  & {\color{fg} \textbf{0.00}}  & 4.9                           & 118104.77  & 118104.77  & 0.00   \\
20 & 41  & \textbf{95033.47}  & \textbf{95033.47}     & {\color{fg} \textbf{0.00}} & 510                           & \textbf{95033.47}  & \textbf{95033.47}     & {\color{fg} \textbf{0.00}} & 574                           & \textbf{95033.47}   & \textbf{95033.47}   & {\color{fg} \textbf{0.00}}  & 904                           & \textbf{95033.47}  & \textbf{95033.47}   & {\color{fg} \textbf{0.00}} & 654                           & \textbf{95033.47}  & \textbf{95033.47}   & {\color{fg} \textbf{0.00}}  & 634                           & 95033.47   & 95033.47   & 0.00   \\
20 & 81  & 79334.00           & \textbf{92544.51}     & 14.27                                & TO                            & 80382.53           & \textbf{92544.51}     & 13.14                                & TO                            & \textbf{80955.44}   & \textbf{92544.51}   & {\color{fg} \textbf{12.52}} & TO                            & 79402.51           & \textbf{92544.51}   & 14.20                                & TO                            & 80308.53           & \textbf{92544.51}   & 13.22                                 & TO                            & 80955.44   & 92544.51   & 12.52  \\
20 & 161 & 70600.96           & 94066.26              & 24.95                                & TO                            & 70548.66           & \textbf{91631.03}     & 23.01                                & TO                            & 70627.11            & \textbf{91631.03}   & 22.92                                 & TO                            & 70370.22           & \textbf{91631.03}   & 23.20                                & TO                            & \textbf{70628.46}  & \textbf{91631.03}   & {\color{fg} \textbf{22.92}} & TO                            & 70628.46   & 91631.03   & 22.92  \\ \midrule
40 & 41  & \textbf{391076.40} & \textbf{391076.40}    & {\color{fg} \textbf{0.00}} & 1157                          & \textbf{391076.40} & \textbf{391076.40}    & {\color{fg} \textbf{0.00}} & 1452                          & \textbf{391076.40}  & \textbf{391076.40}  & {\color{fg} \textbf{0.00}}  & 1151                          & \textbf{391076.40} & \textbf{391076.40}  & {\color{fg} \textbf{0.00}} & 1183                          & \textbf{391076.40} & \textbf{391076.40}  & {\color{fg} \textbf{0.00}}  & 1227                          & 391076.40  & 391076.40  & 0.00   \\
40 & 81  & 242883.61          & \multicolumn{1}{c}{-} & \multicolumn{1}{c}{-}                & TO                            & 253560.69          & 337169.25             & 24.80                                & TO                            & 254675.76           & 318094.39           & 19.94                                 & TO                            & \textbf{259178.21} & 317613.35           & 18.40                                & TO                            & 256115.81          & \textbf{316923.28}  & 19.19                                 & TO                            & 259178.21  & 316923.28  & 18.22  \\
40 & 161 & 220675.01          & \multicolumn{1}{c}{-} & \multicolumn{1}{c}{-}                & MO                            & 224163.88          & 382472.46             & 41.39                                & TO                            & 223818.05           & 316262.24           & 29.23                                 & MO                            & 224098.40          & 313045.86           & 28.41                                & TO                            & \textbf{224816.16} & \textbf{312500.91}  & {\color{fg} \textbf{28.06}} & TO                            & 224816.16  & 312500.91  & 28.06  \\ \midrule
80 & 81  & 1159711.10         & \multicolumn{1}{c}{-} & \multicolumn{1}{c}{-}                & TO                            & 1159455.64         & \multicolumn{1}{c}{-} & \multicolumn{1}{c}{-}                & TO                            & \textbf{1236138.36} & 1557734.36          & 20.65                                 & TO                            & 1235436.20         & \textbf{1557280.92} & 20.67                                & TO                            & 1218417.46         & 1559213.76          & 21.86                                 & TO                            & 1236138.36 & 1557280.92 & 20.62  \\
80 & 161 & 0.00               & \multicolumn{1}{c}{-} & \multicolumn{1}{c}{-}                & MO                            & \textbf{762357.31} & \multicolumn{1}{c}{-} & \multicolumn{1}{c}{-}                & MO                            & 0.00                & \textbf{1420767.83} & 100.00                                & MO                            & 0.00               & 1420767.83          & 100.00                               & MO                            & \textbf{762357.31} & \textbf{1420767.83} & {\color{fg} \textbf{46.34}} & MO                            & 762357.31  & 1420767.83 & 46.34  \\ \midrule
\multicolumn{25}{c}{Jovian-moon instances} \\  \midrule
4  & 6   & \textbf{38705.57}  & \textbf{38705.57}     & {\color{fg} \textbf{0.00}} & 0.05                          & \textbf{38705.57}  & \textbf{38705.57}     & {\color{fg} \textbf{0.00}} & 0.04                          & \textbf{38705.57}   & \textbf{38705.57}   & {\color{fg} \textbf{0.00}}  & 0.03                          & \textbf{38705.57}  & \textbf{38705.57}   & {\color{fg} \textbf{0.00}} & 0.05                          & \textbf{38705.57}  & \textbf{38705.57}   & {\color{fg} \textbf{0.00}}  & 0.03                          & 38705.57   & 38705.57   & 0.00   \\
4  & 11  & \textbf{20096.59}  & \textbf{20096.59}     & {\color{fg} \textbf{0.00}} & 0.06                          & \textbf{20096.59}  & \textbf{20096.59}     & {\color{fg} \textbf{0.00}} & 0.04                          & \textbf{20096.59}   & \textbf{20096.59}   & {\color{fg} \textbf{0.00}}  & 0.04                          & \textbf{20096.59}  & \textbf{20096.59}   & {\color{fg} \textbf{0.00}} & 0.1                           & \textbf{20096.59}  & \textbf{20096.59}   & {\color{fg} \textbf{0.00}}  & 0.09                          & 20096.59   & 20096.59   & 0.00   \\
4  & 21  & \textbf{18618.00}  & \textbf{18618.00}     & {\color{fg} \textbf{0.00}} & 0.1                           & \textbf{18618.00}  & \textbf{18618.00}     & {\color{fg} \textbf{0.00}} & 0.05                          & \textbf{18618.00}   & \textbf{18618.00}   & {\color{fg} \textbf{0.00}}  & 0.1                           & \textbf{18618.00}  & \textbf{18618.00}   & {\color{fg} \textbf{0.00}} & 0.7                           & \textbf{18618.00}  & \textbf{18618.00}   & {\color{fg} \textbf{0.00}}  & 0.9                           & 18618.00   & 18618.00   & 0.00   \\
4  & 41  & \textbf{17841.49}  & \textbf{17841.49}     & {\color{fg} \textbf{0.00}} & 0.2                           & \textbf{17841.49}  & \textbf{17841.49}     & {\color{fg} \textbf{0.00}} & 0.08                          & \textbf{17841.49}   & \textbf{17841.49}   & {\color{fg} \textbf{0.00}}  & 0.2                           & \textbf{17841.49}  & \textbf{17841.49}   & {\color{fg} \textbf{0.00}} & 2.3                           & \textbf{17841.49}  & \textbf{17841.49}   & {\color{fg} \textbf{0.00}}  & 2.1                           & 17841.49   & 17841.49   & 0.00   \\
4  & 81  & \textbf{17623.63}  & \textbf{17623.63}     & {\color{fg} \textbf{0.00}} & 0.9                           & \textbf{17623.63}  & \textbf{17623.63}     & {\color{fg} \textbf{0.00}} & 0.1                           & \textbf{17623.63}   & \textbf{17623.63}   & {\color{fg} \textbf{0.00}}  & 1                             & \textbf{17623.63}  & \textbf{17623.63}   & {\color{fg} \textbf{0.00}} & 9                             & \textbf{17623.63}  & \textbf{17623.63}   & {\color{fg} \textbf{0.00}}  & 5.7                           & 17623.63   & 17623.63   & 0.00   \\
4  & 161 & \textbf{17430.97}  & \textbf{17430.97}     & {\color{fg} \textbf{0.00}} & 3.6                           & \textbf{17430.97}  & \textbf{17430.97}     & {\color{fg} \textbf{0.00}} & 0.3                           & \textbf{17430.97}   & \textbf{17430.97}   & {\color{fg} \textbf{0.00}}  & 3.6                           & \textbf{17430.97}  & \textbf{17430.97}   & {\color{fg} \textbf{0.00}} & 18.61                         & \textbf{17430.97}  & \textbf{17430.97}   & {\color{fg} \textbf{0.00}}  & 20.4                          & 17430.97   & 17430.97   & 0.00   \\ \midrule
10 & 11  & \textbf{19871.59}  & \textbf{19871.59}     & {\color{fg} \textbf{0.00}} & 0.2                           & \textbf{19871.59}  & \textbf{19871.59}     & {\color{fg} \textbf{0.00}} & 0.09                          & \textbf{19871.59}   & \textbf{19871.59}   & {\color{fg} \textbf{0.00}}  & 0.2                           & \textbf{19871.59}  & \textbf{19871.59}   & {\color{fg} \textbf{0.00}} & 0.4                           & \textbf{19871.59}  & \textbf{19871.59}   & {\color{fg} \textbf{0.00}}  & 0.2                           & 19871.59   & 19871.59   & 0.00   \\
10 & 21  & \textbf{16268.25}  & \textbf{16268.25}     & {\color{fg} \textbf{0.00}} & 0.8                           & \textbf{16268.25}  & \textbf{16268.25}     & {\color{fg} \textbf{0.00}} & 0.4                           & \textbf{16268.25}   & \textbf{16268.25}   & {\color{fg} \textbf{0.00}}  & 0.5                           & \textbf{16268.25}  & \textbf{16268.25}   & {\color{fg} \textbf{0.00}} & 1.1                           & \textbf{16268.25}  & \textbf{16268.25}   & {\color{fg} \textbf{0.00}}  & 0.5                           & 16268.25   & 16268.25   & 0.00   \\
10 & 41  & \textbf{15611.09}  & \textbf{15611.09}     & {\color{fg} \textbf{0.00}} & 2.5                           & \textbf{15611.09}  & \textbf{15611.09}     & {\color{fg} \textbf{0.00}} & 1.3                           & \textbf{15611.09}   & \textbf{15611.09}   & {\color{fg} \textbf{0.00}}  & 2                             & \textbf{15611.09}  & \textbf{15611.09}   & {\color{fg} \textbf{0.00}} & 2.4                           & \textbf{15611.09}  & \textbf{15611.09}   & {\color{fg} \textbf{0.00}}  & 0.9                           & 15611.09   & 15611.09   & 0.00   \\
10 & 81  & \textbf{15443.86}  & \textbf{15443.86}     & {\color{fg} \textbf{0.00}} & 12.7                          & \textbf{15443.86}  & \textbf{15443.86}     & {\color{fg} \textbf{0.00}} & 7.3                           & \textbf{15443.86}   & \textbf{15443.86}   & {\color{fg} \textbf{0.00}}  & 12.3                          & \textbf{15443.86}  & \textbf{15443.86}   & {\color{fg} \textbf{0.00}} & 14.4                          & \textbf{15443.86}  & \textbf{15443.86}   & {\color{fg} \textbf{0.00}}  & 7.3                           & 15443.86   & 15443.86   & 0.00   \\
10 & 161 & \textbf{15373.88}  & \textbf{15373.88}     & {\color{fg} \textbf{0.00}} & 92                            & \textbf{15373.88}  & \textbf{15373.88}     & {\color{fg} \textbf{0.00}} & 37.2                          & \textbf{15373.88}   & \textbf{15373.88}   & {\color{fg} \textbf{0.00}}  & 128                           & \textbf{15373.88}  & \textbf{15373.88}   & {\color{fg} \textbf{0.00}} & 155                           & \textbf{15373.88}  & \textbf{15373.88}   & {\color{fg} \textbf{0.00}}  & 51.8                          & 15373.88   & 15373.88   & 0.00   \\ \midrule
20 & 21  & \textbf{50283.81}  & \textbf{50283.81}     & {\color{fg} \textbf{0.00}} & 6.4                           & \textbf{50283.81}  & \textbf{50283.81}     & {\color{fg} \textbf{0.00}} & 4.5                           & \textbf{50283.81}   & \textbf{50283.81}   & {\color{fg} \textbf{0.00}}  & 3.9                           & \textbf{50283.81}  & \textbf{50283.81}   & {\color{fg} \textbf{0.00}} & 5.6                           & \textbf{50283.81}  & \textbf{50283.81}   & {\color{fg} \textbf{0.00}}  & 3.9                           & 50283.81   & 50283.81   & 0.00   \\
20 & 41  & \textbf{46505.23}  & \textbf{46505.23}     & {\color{fg} \textbf{0.00}} & 692                           & \textbf{46505.23}  & \textbf{46505.23}     & {\color{fg} \textbf{0.00}} & 306                           & \textbf{46505.23}   & \textbf{46505.23}   & {\color{fg} \textbf{0.00}}  & 322                           & \textbf{46505.23}  & \textbf{46505.23}   & {\color{fg} \textbf{0.00}} & 340                           & \textbf{46505.23}  & \textbf{46505.23}   & {\color{fg} \textbf{0.00}}  & 268                           & 46505.23   & 46505.23   & 0.00   \\
20 & 81  & 42003.30           & \textbf{44797.84}     & 6.24                                 & TO                            & 43242.94           & \textbf{44797.84}     & 3.47                                 & TO                            & 42963.33            & \textbf{44797.84}   & 4.10                                  & TO                            & 42406.85           & 44925.25            & 5.61                                 & TO                            & \textbf{43587.94}  & \textbf{44797.84}   & {\color{fg} \textbf{2.70}}  & TO                            & 43587.94   & 44797.84   & 2.70   \\
20 & 161 & 37554.80           & 44602.57              & 15.80                                & TO                            & 37748.77           & \textbf{44340.50}     & 14.87                                & TO                            & 37501.51            & 44602.57            & 15.92                                 & TO                            & 37504.14           & 44602.57            & 15.91                                & TO                            & \textbf{38028.42}  & 44602.57            & 14.74                                 & TO                            & 38028.42   & 44340.50   & 14.24  \\ \bottomrule
\end{tabular}
}
  \end{table}
\end{landscape}

\end{document}